\newtheorem{theorem}{Theorem}[section]
\newtheorem{lemma}[theorem]{Lemma}
\newtheorem{proposition}[theorem]{Proposition}
\theoremstyle{definition}
\newtheorem{theorem-definition}[theorem]{Theorem-Definition}
\theoremstyle{remark}
\numberwithin{equation}{section}
\title{Linear response for systems with a cusp}
\address{V.I. Romanovskiy  Institute of Mathematics of Uzbekistan Academy of Sciences,  Tashkent, Uzbekistan}
\email{davrbek.oltiboyev@gmail.com}
\date{}
\begin{document}
\maketitle

\begin{abstract}
In this note we consider a tent-like family with a cusp at the singular point and show that the linear response holds for certain perturbations of this family. This contrasts the tent-like maps with finite derivatives at the singularity. Our results extend the results of \cite{BG} to the larger class of singularities and we obtain the linear response formula in $L^p$ for $p>1$. 
\end{abstract}

\section{Introduction}
Invariant probability measures allow the application of probabilistic tools in the analysis of chaotic dynamical systems. An important question from both theoretical and applied perspectives is the stability of the invariant measure under perturbations of the system. Suppose that one can show that the invariant measure varies smoothly under perturbations. In that case, it implies an approximation of the perturbed measures, hence its statistical properties via the invariant measure of the unperturbed system and its statistical properties. The smoothness of the invariant density with under perturbations is called the \textit{linear response}. Ruelle pioneered the theory in \cite{Rue}. The first results appeared in \cite{Dol, B} and followed by many others, see for example \cite{AFG,  B, BSma1, BSma2, BTod, BomCasVar, ButL, DGTS} and references therein for various results. For a survey of results, when the linear holds or not, we refer to \cite{Borelse}. The results show that linear response holds often when the underlying system have good hyperbolicity and smoothness, or induces to such a system \cite{Kor, BS, BRS} and its perturbations are smooth with respect to parameter.  Recent results in this direction show that in some cases a linear response can be obtained for certain discontinuous perturbations if the original system mixes sufficiently fast \cite{Can}. 

An interesting observation was made in \cite{BG}, where it was shown that certain types of singularities help to keep the invariant density smooth under perturbations. More precisely,  they studied linear response for a class of maps with cusp singularities with infinite derivatives and their pertubations. Their analysis was conducted in Sobolev spaces $W^{1,1}$ and $W^{2,1}$, where they showed that the presence of a cusp (with power-law singularities) induces a regularization effect on the associated transfer operator, enabling linear response for deterministic perturbations that change the topological class of the system\footnote{In particular, the perturbed map is not necessarily topologically conjugated to the original map}. Here, we generalize their framework to the spaces $W^{1,p}$ and $W^{2,p}$ for $p \in (1, \infty)$, which allows for a broader range of applications and sharper estimates. Our approach mirrors theirs but requires careful adjustments to handle the $L^p$-norms and the associated spectral properties of the transfer operator. 


Let $M$ be a compact manifold and $T \colon M \to M$. We say a $T$-invariant measure $\mu$ is \emph{physical} if there exists a set $B \subset M$ of positive Lebesgue measure such that for every continuous function $f \colon M \to \mathbb{R}$, the time averages converge to the space average:
\begin{align*}
\lim_{n \to \infty} \frac{1}{n} \sum_{i=0}^{n-1} f(T^i(x)) = \int_M f \, d\mu
\end{align*}
for all $x \in B$. Our main objective is to study how these measures vary under small perturbations of the map $T$ in an appropriate topology.

Consider a family $\{(M, T_\varepsilon)\}_{\varepsilon \in V}$, where $V$ is a neighborhood of $0$, with $T_0 = T$ and $T_\varepsilon \to T_0$ as $\varepsilon \to 0$, where each perturbed map $T_\varepsilon$ admits a unique physical measure $\mu_\varepsilon$. 

The system $(M, T_0, \mu_0)$ is called \emph{statistically stable} if the map $\varepsilon \mapsto \mu_\varepsilon$ is continuous at $\varepsilon = 0$. When this map is differentiable in a suitable sense, we say the system admits \emph{linear response}, with the first-order expansion:
\begin{align*}
    \mu_\varepsilon = \mu_0 + \left. \frac{\partial \mu_{\varepsilon}}{\partial \varepsilon} \right|_{\varepsilon=0} \cdot \varepsilon + o(\varepsilon)
\end{align*}
where the error term $o(\varepsilon)$ is understood in the relevant topology.

Fix $\delta>0$ and consider a family of non-singular maps $T_{\varepsilon}:[0,1] \rightarrow[0,1]$, $\varepsilon \in[0, \delta)$, with respect to the Lebesgue measure $\mu$. The associated transfer operator $P_{T_{\varepsilon}}: L^{p} \to L^{p}$ is defined by the duality
\begin{align*}
   \int_{0}^{1} P_{T_{\varepsilon}} f \cdot g \mathrm{~d} \mu=\int_{0}^{1} f \cdot g \circ T_{\varepsilon}  \mathrm{~d} \mu \text{     for all        
   }  f \in L^{1}, g \in L^{\infty}.
\end{align*}
We impose the following conditions on $T_{\varepsilon}$ for parametres $C \geqslant 0, c \in(0,1), \beta \in\left(-1,\frac{1-2p}{2p}\right)$, and for fixed $1< p<\infty$:

(A1) $T_{0, \varepsilon}:=\left.T_{\varepsilon}\right|_{[0, c)}$ and $T_{1, \varepsilon}:=\left.T_{\varepsilon}\right|_{(c, 1]}$ are one-to-one.

(A2) $T_{\varepsilon}(0)=0$,  $T_{\varepsilon}(1)=0$,  $\lim\limits_{x \rightarrow c^{ \pm}} T_{\varepsilon}(x)=a_{\varepsilon} \in[0,1].$

(A3) $\left.T_{\varepsilon}\right|_{[0,1] \backslash\{c\}} \in C^{3}$.

(A4) $\sup _{\varepsilon \in[0, \delta)} \inf _{x \in[0,1] \backslash\{c\}}\left|T_{\varepsilon}^{\prime}(x)\right| \geqslant \theta>1$.

(A5) $T_{0}$ is topologically mixed on $\left[0, a_{0}\right]$.

(A6) $\lim\limits_{x \rightarrow c^{ \pm}} T_{\varepsilon}^{\prime}(x)= \pm \infty$ and $\lim\limits_{x \rightarrow c^{ \pm}} \frac{\left|T_{\varepsilon}^{\prime}(x)\right|}{|x-c|^{\beta}}=C_{\varepsilon, 1}$ for all $ \varepsilon \in[0, \delta) .$

(A7) $\lim\limits_{x \rightarrow c^{ \pm}} \frac{\left|T_{\varepsilon}^{\prime \prime}(x)\right|}{\mid x-c|^{\beta-1}}=C_{\varepsilon, 2}$, $\lim\limits_{x \rightarrow c^{ \pm}} \frac{\left|T_{\varepsilon}^{\prime \prime \prime}(x)\right|}{| x-c|^{\beta-2}}=C_{\varepsilon, 3}$ for all $\varepsilon \in[0, \delta).$

(A8) The derivatives of $T_{\varepsilon}$ satisfies the following:

$$
\sup _{\varepsilon \in[0, \delta), i \in\{0,1,2\}, x \in[0,1]}\left|\frac{T_{\varepsilon}^{(i+1)}(x)}{(x-c)^{\beta-i}}\right|<\infty .
$$
and 
$$\inf _{\varepsilon \in[0, \delta), x \in[0,1]}\left|\frac{T_{\varepsilon}^{\prime}(x)}{(x-c)^{\beta}}\right|>0.$$

(A9) We require that the transfer operators are close in a mixed norm:
\begin{equation} \label{2.1}
    \sup _{\|f\|_{W^{1,p}} \leqslant 1}\left\|\left(P_{T_{0}}-P_{T_{\varepsilon}}\right) f\right\|_{L^{2p}} \to 0 \text{ as } \varepsilon \rightarrow 0.
\end{equation}
 Moreover, for $f \in W^{2,p}$, there exists $\left.\partial_{\varepsilon} P_{T_{\varepsilon}} f\right|_{\varepsilon=0} \in W^{1,p}$ such that
\begin{equation}\label{2.2}
    \left\|P_{T_{\varepsilon}} f-P_{T_{0}} f-\varepsilon\left(\left.\partial_{\varepsilon} P_{T_{\varepsilon}} f\right|_{\varepsilon=0}\right)\right\|_{W^{1,p}}=o(\varepsilon).
\end{equation}

It is known that $P_{T_{\varepsilon}}$ has a spectral gap when acting on functions of bounded variation \cite{d},  we study its action on the finer Banach spaces, namely the Sobolev spaces $W^{i, p}, i=1,2$. In particular, we show that $P_{T_{\varepsilon}}$ admits a spectral gap on $W^{i, p}, i=1,2$, which allows us to conclude $T_{\varepsilon}$ has an invariant density with sufficient regularity in $x$. The following result is the main result of the paper. 

\begin{theorem}\label{t:main}
    There exists $\delta_{2}>0$ such that for all $\varepsilon \in\left[0, \delta_{2}\right)$, $T_{\varepsilon}$ admits a unique invariant probability density $h_{\varepsilon} \in W^{2,p}$. Moreover, the map  $\varepsilon \mapsto h_{\varepsilon}$ is differentiable in $L^{p}$ at $\varepsilon=0$. Furthermore,
$$
h_{\varepsilon}=h_{0}+\varepsilon\left(I-P_{T_{0}}\right)^{-1}(q)+o(\varepsilon),
$$
where
$$
q(x)= \begin{cases}P_{T_{0}}\left[A_{0} h_{0}{ }^{\prime}+B_{0} h_{0}\right](x) & \text { for } x \in\left[0, a_{0}\right), \\ 0 & \text { for } x \in\left[a_{0}, 1\right]\end{cases}
$$
with
$$
A_{\varepsilon}=-\left(\frac{\partial_{\varepsilon} T_{\varepsilon}}{T_{\varepsilon}^{\prime}}\right), \quad B_{\varepsilon}=\left(\frac{\partial_{\varepsilon} T_{\varepsilon} \cdot T_{\varepsilon}^{\prime \prime}}{T_{\varepsilon}^{\prime 2}}-\frac{\partial_{\varepsilon} T_{\varepsilon}^{\prime}}{T_{\varepsilon}^{\prime}}\right)
$$
and the error term $o(\varepsilon)$ is understood in the $L^{p}$-topology.
\end{theorem}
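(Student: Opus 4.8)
\smallskip
\noindent\textbf{Proof plan.}
The plan is to run the classical transfer-operator scheme on the Sobolev scale: uniform Lasota--Yorke estimates, then a uniform spectral gap, then existence, uniqueness and Lipschitz statistical stability of $h_\varepsilon$, and finally a resolvent identity for $h_\varepsilon-h_0$ which, fed with \eqref{2.1}--\eqref{2.2}, produces the first-order expansion. First I would write, for the two inverse branches $y_{j,\varepsilon}\colon(0,a_\varepsilon)\to[0,1]$ ($j=0,1$),
\[
P_{T_\varepsilon}f=\mathbf{1}_{[0,a_\varepsilon]}\cdot\sum_{j}(f\circ y_{j,\varepsilon})\,|y_{j,\varepsilon}'|,\qquad y_{j,\varepsilon}'=\frac1{T_\varepsilon'\circ y_{j,\varepsilon}},
\]
and differentiate once and twice in $x$: every resulting term is $f^{(k)}\circ y_{j,\varepsilon}$ times a universal polynomial in $(T_\varepsilon')^{-1},T_\varepsilon'',T_\varepsilon'''$ composed with $y_{j,\varepsilon}$. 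Using (A6)--(A8) and $a_\varepsilon-T_\varepsilon\asymp|\cdot-c|^{\beta+1}$, hence $|y_{j,\varepsilon}-c|\asymp(a_\varepsilon-x)^{1/(\beta+1)}$, a change of variables makes all these weights bounded (because $\beta<0$ forces $(T_\varepsilon')^{1-kp}$ to be bounded for $k\ge1$) while the top-order term contracts, after passing to an iterate, by (A4) (estimated via Hölder's inequality with the weights $1/|(T_\varepsilon^n)'|$); the window $\beta\in(-1,\tfrac{1-2p}{2p})$ is exactly what keeps the $f$- and $f'$-terms $L^p$-integrable and makes the linear-response source below lie in $L^{2p}$. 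Iterating then gives, uniformly in $\varepsilon\in[0,\delta)$, $\|P_{T_\varepsilon}^nf\|_{W^{i,p}}\le C\rho^n\|f\|_{W^{i,p}}+C_n\|f\|_{W^{i-1,p}}$ for $i=1,2$ with $\rho<1$, and the same computation gives the analogous inequality on $W^{s,p}$ for any fixed $s\in(0,1)$ (with weak space $L^p$).

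Combining these with the compact embeddings $W^{i,p}\hookrightarrow W^{i-1,p}$ and $W^{s,p}\hookrightarrow L^p$, a Hennion-type quasi-compactness argument, positivity of $P_{T_\varepsilon}$ and topological mixing (A5), I would conclude that $P_{T_\varepsilon}$ has a spectral gap on $W^{2,p}$, $W^{1,p}$ and $W^{s,p}$, uniformly for small $\varepsilon$; hence $T_\varepsilon$ has a unique invariant probability density $h_\varepsilon\in W^{2,p}$, positive on $[0,a_\varepsilon]$, with $\sup_\varepsilon\|h_\varepsilon\|_{W^{2,p}}<\infty$, and $I-P_{T_0}$ is boundedly invertible on the zero-average hyperplane of each of these spaces. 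For statistical stability I would feed \eqref{2.1} into the Keller--Liverani perturbation theorem to obtain $\|h_\varepsilon-h_0\|_{L^{2p}}\to0$, and then upgrade this to a rate: by \eqref{2.2} (and boundedness of $\partial_\varepsilon P_{T_\varepsilon}|_{\varepsilon=0}\colon W^{2,p}\to W^{1,p}$) one has $\|P_{T_\varepsilon}-P_{T_0}\|_{W^{2,p}\to W^{1,p}}=O(\varepsilon)$, so inserting $h_\varepsilon$ into the identity $(I-P_{T_0})(h_\varepsilon-h_0)=(P_{T_\varepsilon}-P_{T_0})h_\varepsilon$ and inverting on zero-average $W^{1,p}$ yields $\|h_\varepsilon-h_0\|_{W^{1,p}}=O(\varepsilon)$.

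For the expansion itself I would use the same identity,
\[
h_\varepsilon-h_0=(I-P_{T_0})^{-1}\big[(P_{T_\varepsilon}-P_{T_0})h_0+(P_{T_\varepsilon}-P_{T_0})(h_\varepsilon-h_0)\big],
\]
the inverse being taken on the zero-average subspace. By \eqref{2.2} the first bracket equals $\varepsilon\,\partial_\varepsilon P_{T_\varepsilon}h_0|_{\varepsilon=0}+o(\varepsilon)$ in $W^{1,p}$, and differentiating $P_{T_\varepsilon}f=\mathbf{1}_{[0,a_\varepsilon]}\sum_j(f\circ y_{j,\varepsilon})|y_{j,\varepsilon}'|$ in $\varepsilon$, using $\partial_\varepsilon y_{j,\varepsilon}=-(\partial_\varepsilon T_\varepsilon/T_\varepsilon')\circ y_{j,\varepsilon}$, yields $\partial_\varepsilon P_{T_\varepsilon}f=P_{T_\varepsilon}(A_\varepsilon f'+B_\varepsilon f)$ with $A_\varepsilon,B_\varepsilon$ as in the statement and no boundary term, since $\sum_j(f\circ y_{j,\varepsilon})|y_{j,\varepsilon}'|\to0$ as $x\to a_\varepsilon^-$ (the cusp regularization; with a finite derivative at the singularity this term would be a Dirac mass at $a_\varepsilon$ and the $W^{1,p}$ response would break). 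Setting $\varepsilon=0$ and $f=h_0$ identifies the source with $q=P_{T_0}(A_0h_0'+B_0h_0)$, supported on $[0,a_0]$ (the image of $P_{T_0}$) and with $\int q\,d\mu=0$ (because $\int P_{T_\varepsilon}h_0\,d\mu\equiv1$), so $q$ lies in the zero-average subspace. For the second bracket I would write $h_\varepsilon-h_0=\varepsilon g_\varepsilon$, $\|g_\varepsilon\|_{W^{1,p}}=O(1)$, and use \eqref{2.1} to get $\|(P_{T_\varepsilon}-P_{T_0})(h_\varepsilon-h_0)\|_{L^{2p}}=o(\varepsilon)$, together with the crude bound $\|(P_{T_\varepsilon}-P_{T_0})(h_\varepsilon-h_0)\|_{W^{1,p}}=O(\varepsilon)$; interpolating puts it in $o(\varepsilon)$ inside $W^{s,p}$. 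Applying $(I-P_{T_0})^{-1}$ on the zero-average subspace of $W^{1,p}$ to the first bracket (giving $\varepsilon(I-P_{T_0})^{-1}q+o(\varepsilon)$) and of $W^{s,p}$ to the second (giving $o(\varepsilon)$), and using $W^{1,p},W^{s,p}\hookrightarrow L^p$, yields $h_\varepsilon-h_0=\varepsilon(I-P_{T_0})^{-1}q+o(\varepsilon)$ in $L^p$.

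The hard part will be the uniform \emph{second-order} Lasota--Yorke inequality on $W^{2,p}$ (and its fractional analogue) in the presence of the cusp: controlling $\partial_x^2P_{T_\varepsilon}f$ in $L^p$ near $x=a_\varepsilon$, checking that all boundary terms there vanish, and that the top-order coefficient genuinely contracts after iteration. This is precisely where the passage from \cite{BG} ($p=1$) to $p\in(1,\infty)$ is delicate: the admissible range of $\beta$ narrows to $(-1,\tfrac{1-2p}{2p})$ so that the inverse-branch weights and the product $B_0h_0\asymp|\cdot-c|^{-(\beta+1)}$ are $2p$-integrable, and one must track, via Hölder's inequality, how this summability is distributed across the factors $A_0h_0'$ and $B_0h_0$ defining $q$. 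A secondary point is making the remainder in the last step a genuine $o(\varepsilon)$ rather than $O(\varepsilon)$, which forces combining the $O(\varepsilon)$ rate of the statistical stability with the $o(1)$ operator smallness of \eqref{2.1} through the interpolation step.
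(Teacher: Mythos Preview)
Your overall strategy is sound and parallels the paper through the spectral-gap stage: both establish uniform Lasota--Yorke inequalities on $W^{1,p}$ and $W^{2,p}$ (the paper takes $L^{2p}$ as the weak norm for the first-order inequality, which is exactly why the smallness hypothesis \eqref{2.1} lives in $L^{2p}$), then invoke Hennion and Keller--Liverani for a uniform gap and $h_\varepsilon\in W^{2,p}$. One minor simplification: the paper gets contraction already at the first iterate, since $\|1/T_\varepsilon'\|_\infty\le\theta^{-1}<1$ by (A4), so no passage to an iterate is needed.

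The substantive divergence is in the linear-response step. You expand around the \emph{unperturbed} resolvent, writing
\[
h_\varepsilon-h_0=(I-P_{T_0})^{-1}\bigl[(P_{T_\varepsilon}-P_{T_0})h_0+(P_{T_\varepsilon}-P_{T_0})(h_\varepsilon-h_0)\bigr],
\]
which forces you to control the second bracket. To make it $o(\varepsilon)$ in a space on which $(I-P_{T_0})^{-1}$ is bounded you first need the intermediate rate $\|h_\varepsilon-h_0\|_{W^{1,p}}=O(\varepsilon)$ (which in turn needs a uniform-in-$f$ version of \eqref{2.2}, not just the pointwise statement in (A9)), then introduce fractional spaces $W^{s,p}$, prove a Lasota--Yorke inequality and spectral gap there, and interpolate. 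The paper sidesteps all of this by using the \emph{perturbed} resolvent: from $h_\varepsilon=G_\varepsilon H_\varepsilon h_0+h_0$ with $G_\varepsilon=(I-P_{T_\varepsilon})^{-1}$ and $H_\varepsilon=P_{T_\varepsilon}-P_{T_0}$, assumption \eqref{2.2} gives $H_\varepsilon h_0=\varepsilon q+o(\varepsilon)$ in $W^{1,p}$, and the uniform bound on $G_\varepsilon|_{W_0^{1,p}}$ from Keller--Liverani yields $h_\varepsilon-h_0=\varepsilon G_\varepsilon q+o(\varepsilon)$ in $W^{1,p}$. The weak resolvent convergence $\|G_\varepsilon q-G_0 q\|_{L^{2p}}\to0$ (again Keller--Liverani) then turns $\varepsilon G_\varepsilon q$ into $\varepsilon G_0 q+o(\varepsilon)$ in $L^p$. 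No second bracket, no fractional spaces, no preliminary $O(\varepsilon)$ rate are needed. Your route can be made to work, but the fractional Lasota--Yorke inequality is not literally ``the same computation'' as the integer-order case and would require its own argument; the paper's approach is considerably more economical.
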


\section{ Proof of Theorem \ref{t:main}}

The proof of Theorem \ref{t:main} consists of two principal parts. First, in Section \ref{section: 3.1}, we demonstrate that the transfer operator $P_{T_{\varepsilon}}$ possesses a uniform spectral gap when acting on both $W^{1,p}$ and $W^{2,p}$, with constants independent of the parameter $\varepsilon$. This analysis implies the existence of a unique invariant density $h_{\varepsilon} \in W^{2,p}$ for each map $T_{\varepsilon}$. Then, in Section \ref{section 3.2}, we prove that the mapping $\varepsilon \mapsto h_{\varepsilon}$ is differentiable with respect to the $L^p$-norm and derive the explicit form of its derivative.
\subsection{Uniform spectral gap on $W^{1,p}$ and $W^{2,p}$} \label{section: 3.1}

In this section we establish that the transfer operators $P_{T_\varepsilon}$ associated with our perturbed systems admit a uniform spectral gap when acting on the Sobolev spaces $W^{1,p}$ and $W^{2,p}$ (Lemma~\ref {proposition 3.3}). This result follows from classical spectral theory tools, which we now recall.

\subsubsection{Classical results.} We begin by stating a lemma, which is a version of the classical results on the quasi-compactness of transfer operators adapted from  \cite[Lemma 2.2]{Bardet J B} and \cite[Theorem XIV.3]{Hennion H}.

\begin{lemma}\label{l:norm}
   Let $(B, \|\cdot\|_s)$ be a Banach space equipped with a continuous semi-norm $\|\cdot\|_w$. Suppose $Q : B \to B$ is a bounded linear operator satisfying: \\
     (i) Compactness: Every sequence $\{x_n\}$ with $\|x_n\|_s \leq 1$ contains a $\|\cdot\|_w$-Cauchy subsequence;
     (ii) Norm inequality: There exist constants $\lambda \geq 0$ and $C > 0$ such that
    \begin{align*}
    \|Qf\|_s \leq \lambda \|f\|_s + C \|f\|_w \quad \text{for all } f \in B. 
    \end{align*}
Then the essential spectral radius $\rho_{\mathrm{ess}}(Q)$ of $Q$ satisfies $\rho_{\mathrm{ess}}(Q) \leq \lambda$.
\end{lemma}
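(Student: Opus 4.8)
The plan is to run the classical Hennion--Nussbaum argument, of which the cited statements in \cite{Bardet J B} and \cite{Hennion H} are variants. The first step is to iterate the norm inequality (ii): writing $Q^k f = Q(Q^{k-1}f)$ and using only linearity and (ii) repeatedly, one obtains for every $k\ge 1$ and every $f\in B$
\begin{equation*}
\|Q^k f\|_s \;\le\; \lambda^k \|f\|_s \;+\; C\sum_{j=0}^{k-1}\lambda^{\,k-1-j}\,\|Q^j f\|_w .
\end{equation*}
Note that (ii) together with the continuity estimate $\|g\|_w\le M\|g\|_s$ already forces $\|Q\|_s\le\lambda+CM$, so that $\|Q^jf\|_s\le\|Q\|_s^{\,j}\|f\|_s$ for all $j$; in particular $Q$ is indeed $\|\cdot\|_s$-bounded.

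Next I would appeal to Nussbaum's characterisation of the essential spectral radius, $\rho_{\mathrm{ess}}(Q)=\lim_{k\to\infty}\gamma(Q^k)^{1/k}$, where $B_s=\{f:\|f\|_s\le 1\}$ and $\gamma(Q^k)$ is the infimum of those $r>0$ for which $Q^k(B_s)$ admits a finite cover by $\|\cdot\|_s$-balls of radius $r$ (a measure of non-compactness; the limit exists by submultiplicativity of $\gamma$ along iterates). It therefore suffices to prove $\gamma(Q^k)\le 2\lambda^k$ for every $k$, since then $\rho_{\mathrm{ess}}(Q)\le\lim_k(2\lambda^k)^{1/k}=\lambda$.

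To bound $\gamma(Q^k)$, fix $k$ and $\eta>0$. Hypothesis (i) says exactly that $B_s$ is totally bounded for the pseudometric induced by $\|\cdot\|_w$; rescaling, $R\,B_s$ with $R=\|Q\|_s^{\,k-1}$ is totally bounded too, hence so is the product $(R\,B_s)^k$ in the max-of-$\|\cdot\|_w$ metric. For $f\in B_s$ the orbit segment $(f,Qf,\dots,Q^{k-1}f)$ lies in $(R\,B_s)^k$, so the set of all such segments is totally bounded and can be covered by finitely many $\eta$-balls \emph{centred at points of the set itself}, i.e. of the form $(f_i,Qf_i,\dots,Q^{k-1}f_i)$ with $f_i\in B_s$, $i=1,\dots,N$. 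Thus for every $f\in B_s$ there is an $i$ with $\|Q^jf-Q^jf_i\|_w<\eta$ for some $i$ and \emph{all} $0\le j\le k-1$ at once. Applying the iterated inequality to $f-f_i$, with $\|f-f_i\|_s\le 2$ and $\|Q^j(f-f_i)\|_w<\eta$, gives
\begin{equation*}
\|Q^k f-Q^k f_i\|_s \;\le\; 2\lambda^k + C\eta\sum_{j=0}^{k-1}\lambda^{\,k-1-j},
\end{equation*}
so $Q^k(B_s)$ is covered by $N$ balls of that radius; letting $\eta\downarrow 0$ with $k$ fixed yields $\gamma(Q^k)\le 2\lambda^k$, and the lemma follows.

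The step that requires genuine care is the covering in the third paragraph: one must cover the \emph{entire} orbit segment $(f,Qf,\dots,Q^{k-1}f)$ simultaneously, with centres on the diagonal, so that a single $f_i$ controls all of $\|Q^j f-Q^j f_i\|_w$, $0\le j\le k-1$, at once — this is what makes the argument work even though $Q$ is not assumed $\|\cdot\|_w$-bounded. Covering the sets $Q^j(B_s)$ independently would not allow the estimate to be fed back into the iterated inequality and would only reprove the trivial bound $\rho_{\mathrm{ess}}(Q)\le\|Q\|_s$. Everything else — the equivalence of sequential and ordinary total boundedness in a pseudometric space, which converts (i) into the covering statement, and Nussbaum's formula for $\rho_{\mathrm{ess}}$ — is standard.
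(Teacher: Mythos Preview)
The paper does not supply its own proof of this lemma: it is stated as a classical result and attributed to \cite[Lemma~2.2]{Bardet J B} and \cite[Theorem~XIV.3]{Hennion H}. Your argument is precisely the Hennion--Nussbaum proof contained in those references, and it is correct as written; in particular, you identify and handle correctly the one genuinely delicate step---covering the full orbit segment $(f,Qf,\dots,Q^{k-1}f)$ simultaneously in the $\|\cdot\|_w$-pseudometric with centres of the same diagonal form, so that a single $f_i$ controls all the weak terms in the iterated inequality.
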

We now adapt the main result from \cite{Keller and Liverani}, presenting it in a form suitable for our analysis.
\begin{lemma} \label{lemmaa 3.2}
Let $(B, \|\cdot\|_s)$ be a Banach space equipped with a weak norm $\|\cdot\|_w$, and let $\{P_\varepsilon\}_{\varepsilon \geq 0}$ be a family of bounded linear operators on $B$. Suppose the following conditions hold:
There exist constants $C_1 > 0$ and $M \geq 1$ such that for all $n \in \mathbb{N}$,
   \begin{equation} \label{3.1}
      \| P_\varepsilon^n \|_w \leq C_1 M^n; 
   \end{equation} 
There exist $C_2, C_3 > 0$ and $0 \leq \lambda < 1$ such that for all $f \in B$, $\varepsilon \geq 0$, and $n \in \mathbb{N}$,
   \begin{equation} \label{3.2}
      \| P_\varepsilon^n f \|_s \leq C_2 \lambda^n \| f \|_s + C_3 M^n \| f \|_w;    
   \end{equation} 
The spectral radius of $P_\varepsilon$ excludes $1$ for all $\varepsilon \geq 0$ and the difference $P_0 - P_\varepsilon$ satisfies
  \begin{equation} \label{3.3}
       \| P_0 - P_\varepsilon \|_{s \to w} \leq \tau(\varepsilon),
  \end{equation}
    where $\tau(\varepsilon) \to 0$ monotonically and upper semicontinuously as $\varepsilon \to 0$.
    
    Then there exist $\varepsilon_0 > 0$ and $a > 0$ such that for all $0 \leq \varepsilon \leq \varepsilon_0$ and $f \in B$,
\begin{equation} \label{3.4}
   \| (\mathrm{Id} - P_\varepsilon)^{-1} f \|_s \leq a \| f \|_s, 
\end{equation}
and the resolvents converge weakly as $\varepsilon \to 0$:
\begin{equation} \label{3.5}
    \lim_{\varepsilon \to 0} \| (\mathrm{Id} - P_0)^{-1} - (\mathrm{Id} - P_\varepsilon)^{-1} \|_{s \to w} = 0.
\end{equation}
\end{lemma}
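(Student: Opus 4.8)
The plan is to follow the Keller--Liverani perturbation argument adapted to the present abstract setting. The starting point is the observation that conditions \eqref{3.1} and \eqref{3.2} are precisely the hypotheses of a uniform Lasota--Yorke inequality, so by Lemma \ref{l:norm} applied to each $P_\varepsilon$ one gets $\rho_{\mathrm{ess}}(P_\varepsilon) \le \lambda < 1$ uniformly in $\varepsilon$, while the combination with \eqref{3.1} controls the full spectral radius; thus the part of the spectrum of $P_\varepsilon$ outside the disc of radius $\lambda$ consists of finitely many eigenvalues of finite multiplicity. Since we assume $1 \notin \sigma(P_\varepsilon)$, the resolvent $(\mathrm{Id}-P_\varepsilon)^{-1}$ is well defined as a bounded operator on $(B,\|\cdot\|_s)$ for each fixed $\varepsilon$; the content of \eqref{3.4} is that its norm is \emph{uniformly} bounded for small $\varepsilon$, and the content of \eqref{3.5} is continuity in the weaker $s\to w$ operator topology.

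The key step is to write the resolvent difference via the second resolvent identity,
\[
(\mathrm{Id}-P_0)^{-1} - (\mathrm{Id}-P_\varepsilon)^{-1} = (\mathrm{Id}-P_0)^{-1}(P_0-P_\varepsilon)(\mathrm{Id}-P_\varepsilon)^{-1},
\]
and then to estimate the right-hand side by placing the ``bad'' factor $P_0-P_\varepsilon$, which by \eqref{3.3} is only small as an operator $B_s \to B_w$, between a control of $(\mathrm{Id}-P_\varepsilon)^{-1}$ acting $s\to s$ on the right and a control of $(\mathrm{Id}-P_0)^{-1}$ acting $w\to ?$ on the left. The technical heart is the decomposition of the resolvent into a truncated Neumann-type sum plus a remainder: writing $(\mathrm{Id}-P_\varepsilon)^{-1} = \sum_{k=0}^{N-1} P_\varepsilon^k + P_\varepsilon^N (\mathrm{Id}-P_\varepsilon)^{-1}$ for suitable $N=N(\varepsilon)\to\infty$, one uses \eqref{3.1} on the finitely many terms $P_\varepsilon^k$ (each of which maps $w\to w$ with norm $\le C_1 M^k$) and \eqref{3.2} on the tail, so that the tail contributes a factor $C_2\lambda^N$ in the strong norm which is driven to $0$. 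Balancing $N$ against $\tau(\varepsilon)$ — roughly, choosing $N$ so that $M^N \tau(\varepsilon) \to 0$ while $\lambda^N \to 0$, which is possible because $\lambda < 1 \le M$ and $\tau(\varepsilon) \to 0$ — yields both \eqref{3.4} (via a standard bootstrap/Neumann argument showing $\mathrm{Id}-P_\varepsilon$ is invertible with controlled inverse once $\varepsilon$ is small) and \eqref{3.5}.

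I expect the main obstacle to be the uniform-in-$\varepsilon$ control of $(\mathrm{Id}-P_\varepsilon)^{-1}$ on the strong space, i.e.\ establishing \eqref{3.4}, since a priori the $s$-norm of the resolvent could blow up as $\varepsilon\to 0$ even though each individual resolvent is bounded. The resolution is the classical Keller--Liverani trick: first prove the weak-norm statement \eqref{3.5} restricted to the spectral projections (using that the finitely many peripheral eigenvalues and eigenprojectors of $P_\varepsilon$ vary continuously, in the weak topology, with $\varepsilon$ — this is where upper semicontinuity of $\tau$ and the exclusion of $1$ from the spectrum are used to keep the relevant eigenvalue away from $1$), and then feed this back into the Lasota--Yorke inequality \eqref{3.2} to upgrade to the uniform strong bound. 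Since this is verbatim the argument of \cite{Keller and Liverani} with $\|\cdot\|_s, \|\cdot\|_w$ in place of their norms and with the roles of the constants $C_1,C_2,C_3,M,\lambda,\tau$ matched to their hypotheses, I would present the proof as a reduction: verify that \eqref{3.1}--\eqref{3.3} imply the abstract hypotheses of \cite[Theorem 1]{Keller and Liverani} (or the relevant corollary), and invoke that theorem to obtain \eqref{3.4} and \eqref{3.5}, spelling out only the correspondence of constants and the uniform quasi-compactness coming from Lemma \ref{l:norm}.
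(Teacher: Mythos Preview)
Your proposal is correct and matches the paper's approach exactly: the paper does not give an independent proof of this lemma but simply presents it as an adaptation of the main result of \cite{Keller and Liverani} and cites that reference. Your final suggestion---to verify that \eqref{3.1}--\eqref{3.3} are precisely the abstract hypotheses of Keller--Liverani and then invoke their theorem---is therefore exactly what the paper does, and the additional sketch you provide of the underlying argument (truncated Neumann decomposition, balancing $N$ against $\tau(\varepsilon)$, continuity of spectral data) goes beyond what the paper itself includes.
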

\subsubsection{Uniform Spectral Gap for Transfer Operators} \label{subsec:spectral_gap}

Let $I = (0,1)$ be an open interval, and fix $ p \in \mathbb{R}$ with $1 \leq p \leq \infty$. The \emph{Sobolev space} $W^{1,p}(I)$ is defined as
$$
W^{1,p}(I) = \left\{ u \in L^p(I) \;\Big|\; \exists g \in L^p(I) \text{ such that } \int_I u \varphi' = -\int_I g \varphi \quad \forall \varphi \in C_c^1(I) \right\}.
$$
The space $ W_0^{1,p}$ denotes $W_0^{1,p} = \left\{ u \in W^{1,p}(I) \ \middle| \ \int_I u(x) \, dx = 0 \right\}.$ Since we always working over the $I=(0,1)$, we will omit explicit references to it and assume all operations are defined on the interval by default.
\begin{proposition} \label{proposition 3.3}
There exists $\delta_2 > 0$ such that for all $\varepsilon \in [0, \delta_2)$, the transfer operator $P_{T_\varepsilon}$ admits a unique invariant density $h_\varepsilon \in W^{2,p}$. Moreover, $P_{T_\varepsilon}$ exhibits a uniform spectral gap on both $W^{1,p}$ and $W^{2,p}$, with constants independent of $ \varepsilon$. Specifically, there exists $C > 0$ satisfying, 
\begin{equation} \label{3.6}
\| (\mathrm{Id} - P_{T_\varepsilon})^{-1} \|_{W_0^{1,p} \to W^{1,p}} \leq C  \text{  for all  } \varepsilon \in [0, \delta_2).
\end{equation}
\end{proposition}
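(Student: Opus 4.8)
The plan is to run the two–norm spectral machinery on the Sobolev scale $L^{p}\supset W^{1,p}\supset W^{2,p}$, the point being that the cusp forces the term of $P_{T_\varepsilon}$ carrying the highest derivative of $f$ to contract in $L^{p}$, so that $P_{T_\varepsilon}$ is quasi–compact on each of $W^{1,p}$ and $W^{2,p}$ with $\varepsilon$–uniform constants. The two analytic inputs are (i) a uniform Lasota–Yorke inequality for $\{P_{T_\varepsilon}\}$ on this scale, and (ii) the abstract Lemmas~\ref{l:norm} and \ref{lemmaa 3.2}; from the classical $BV$ theory for this family (\cite{d}) I take only the fact that the densities $P_{T_\varepsilon}^{n}\mathbf 1$ are bounded in $L^{\infty}$, hence the powers $P_{T_\varepsilon}^{n}$ are bounded on every $L^{q}$, all uniformly in $n$ and $\varepsilon$.

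\emph{Step 1: uniform Lasota--Yorke inequalities.} I would fix the branch decomposition $P_{T_\varepsilon}f=\sum_{i=0,1}(f\,w_{i,\varepsilon})\circ\psi_{i,\varepsilon}$, with $\psi_{i,\varepsilon}$ the inverse branches of (A1) and $w_{i,\varepsilon}=1/|T_\varepsilon'|$, differentiate $P_{T_\varepsilon}^{n}f$ once and twice by the chain rule, and estimate each resulting term in $L^{p}$ via the change of variables $\int|(g\circ\psi_{i,\varepsilon})|^{p}|\psi_{i,\varepsilon}'|=\int_{\mathrm{branch}}|g|^{p}$ together with the distortion bounds in (A3)--(A8). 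Away from $c$ the inverse branches are uniformly $1/\theta$--Lipschitz by (A4); near $c$ one has, uniformly in $\varepsilon$ by (A6)--(A8), $|T_\varepsilon'|\asymp|x-c|^{\beta}$, $|T_\varepsilon''|\asymp|x-c|^{\beta-1}$, $|T_\varepsilon'''|\asymp|x-c|^{\beta-2}$ with $\beta<0$. The term of $\partial^{j}(P_{T_\varepsilon}^{n}f)$ containing $\partial^{j}f$ picks up, after the change of variables, the pointwise weight $|(T_\varepsilon^{n})'|^{1-(j+1)p}$; since $\beta<0$ this weight is bounded through the cusp, and since $|(T_\varepsilon^{n})'|\ge\theta^{n}$ and $1-(j+1)p<0$ it is at most $\theta^{n(1-(j+1)p)}$, producing a genuine contraction factor $\lambda=\theta^{(1-(j+1)p)/p}<1$. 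Every other term carries at most $j-1$ derivatives of $f$ multiplied by a weight built from $|T_\varepsilon''|$, $|T_\varepsilon'''|$ and negative powers of $|T_\varepsilon'|$; the corresponding change–of–variables integral over a neighbourhood of $c$ converges — this is exactly where the upper bound $\beta<\frac{1-2p}{2p}$ enters, with a margin — so these terms are absorbed into $C\|f\|_{W^{j-1,p}}$ (for $j=1$ the bare–$f$ term is estimated by Hölder against $\|f\|_{L^{2p}}$, which is the role played by the mixed norm in (A9); note $\|\cdot\|_{L^{p}}\le\|\cdot\|_{L^{2p}}$ on $(0,1)$). The multiplicity of the at most $2^{n}$ branches of $T_\varepsilon^{n}$, harmless when $p=1$ but not when $p>1$, is handled by a Hölder/Jensen inequality over preimages dominated by $\sup_{n}\|P_{T_\varepsilon}^{n}\mathbf 1\|_{\infty}<\infty$. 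The outcome is: there are $N$, $\lambda<1$, $C$, independent of $\varepsilon$, with $\|P_{T_\varepsilon}^{N}f\|_{W^{1,p}}\le\lambda\|f\|_{W^{1,p}}+C\|f\|_{L^{2p}}$ and $\|P_{T_\varepsilon}^{N}f\|_{W^{2,p}}\le\lambda\|f\|_{W^{2,p}}+C\|f\|_{W^{1,p}}$.

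\emph{Step 2: from Lasota--Yorke to the (uniform) spectral gap.} On $I=(0,1)$ with $1<p<\infty$ the embeddings $W^{2,p}\hookrightarrow\hookrightarrow W^{1,p}\hookrightarrow\hookrightarrow C^{0}\hookrightarrow L^{2p}$ are compact (Rellich--Kondrachov and Morrey, plus Arzelà--Ascoli), so Lemma~\ref{l:norm} applied to $P_{T_\varepsilon}^{N}$ gives $\rho_{\mathrm{ess}}(P_{T_\varepsilon})\le\lambda^{1/N}<1$ on both $W^{1,p}$ and $W^{2,p}$, uniformly in $\varepsilon$. Since $P_{T_\varepsilon}$ is a positive operator preserving $\int\!\cdot\,d\mu$ with uniformly bounded powers, its spectral radius equals $1$; Krylov--Bogolyubov averaging of $\mathbf 1\in W^{2,p}$, combined with the $W^{2,p}$ Lasota--Yorke inequality, yields a fixed point $h_\varepsilon\in W^{2,p}$, and the topological mixing of $T_{0}$ on $[0,a_{0}]$ (A5) — to which the system reduces, as $T_\varepsilon([0,1])=[0,a_\varepsilon]$ by (A1)--(A2) — forces $1$ to be a simple eigenvalue and the only one of modulus $1$; hence a spectral gap and uniqueness of $h_\varepsilon$ on each space. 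Finally, to make the resolvent bound $\varepsilon$–uniform, restrict $P_{T_\varepsilon}$ to the closed invariant subspace $W_0^{1,p}$, on which $1\notin\operatorname{spec}$, and invoke Lemma~\ref{lemmaa 3.2} with strong norm $W^{1,p}$ and weak norm $L^{2p}$: \eqref{3.1} is the uniform $L^{2p}$–boundedness of $P_{T_\varepsilon}^{n}$, \eqref{3.2} is the Lasota--Yorke inequality of Step 1, \eqref{3.3} is (A9)/\eqref{2.1} (taking $\tau(\varepsilon)=\sup_{\varepsilon'\le\varepsilon}\|P_{T_0}-P_{T_{\varepsilon'}}\|_{W^{1,p}\to L^{2p}}$ to get monotonicity), and the spectrum excludes $1$; then \eqref{3.4} is precisely \eqref{3.6}. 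The same scheme with strong norm $W^{2,p}$ and weak norm $W^{1,p}$ — the closeness $\|P_{T_0}-P_{T_\varepsilon}\|_{W^{2,p}\to W^{1,p}}\to0$ being obtained from \eqref{2.1} by Gagliardo--Nirenberg interpolation against the uniform $W^{2,p}\to W^{2,p}$ bound — gives the uniform gap on $W^{2,p}$.

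\emph{Expected main obstacle.} The whole weight of the argument sits in Step 1, and specifically in the cusp: one must extract from (A6)--(A8) genuinely uniform (in $\varepsilon$) two–sided estimates on $T_\varepsilon'$ and on the inverse branches near $c$, then identify, among the many singular integrals at $c$ coming from the chain rule, the most restrictive one and verify that it converges — with room to spare for the contraction — precisely under $\beta\in(-1,\frac{1-2p}{2p})$. Within this, the passage from the $L^{1}$–type estimates of \cite{BG} to $L^{p}$–type estimates, i.e.\ controlling the branch multiplicity of $T_\varepsilon^{n}$, is the genuinely new point and is what forces the admissible interval for $\beta$ to shrink as $p$ grows. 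Everything downstream of the Lasota--Yorke inequalities — the Sobolev embeddings, the Krylov--Bogolyubov construction of $h_\varepsilon$, the analysis of the peripheral spectrum via (A5), and the application of Lemma~\ref{lemmaa 3.2} — is routine.
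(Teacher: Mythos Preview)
Your overall architecture --- Lasota--Yorke on the Sobolev scale, quasi-compactness via Lemma~\ref{l:norm}, then Keller--Liverani (Lemma~\ref{lemmaa 3.2}) on $W_0^{1,p}$ with weak norm $L^{2p}$ --- is correct and is exactly the paper's strategy. The difference is entirely in how Step~1 is executed.

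The paper does \emph{not} work with iterates $P_{T_\varepsilon}^{n}$, does not track branch multiplicity, and does not borrow $\sup_n\|P_{T_\varepsilon}^{n}\mathbf 1\|_\infty<\infty$ from $BV$ theory. Instead it uses the single-step identity
\[
(P_{T_\varepsilon}f)'=P_{T_\varepsilon}\Bigl(\tfrac{1}{T_\varepsilon'}f'\Bigr)-P_{T_\varepsilon}\Bigl(\tfrac{T_\varepsilon''}{|T_\varepsilon'|T_\varepsilon'}f\Bigr),
\]
so that the $L^p$ estimate of $(P_{T_\varepsilon}f)'$ reduces to the $L^p\to L^p$ boundedness of $P_{T_\varepsilon}$ applied to two explicit functions. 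The leading term immediately gives the contraction factor $\lambda=\|1/T_\varepsilon'\|_\infty\le 1/\theta<1$, and the lower-order term is bounded by $\|T_\varepsilon''/(T_\varepsilon')^{2}\|_{2p}\|f\|_{2p}$ via H\"older; the condition $\beta<\frac{1-2p}{2p}$ is precisely what makes $T_\varepsilon''/(T_\varepsilon')^{2}\sim|x-c|^{-\beta-1}$ lie in $L^{2p}$. A second application of the same identity handles $W^{2,p}$. The weak-norm bound needed for iteration, $\|P_{T_\varepsilon}\|_{L^{2p}\to L^{2p}}\le 2$, is proved directly from the two-branch structure (Lemma~\ref{lemma3.6}), again with no $BV$ input.

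What this buys: the ``expected main obstacle'' you identify --- controlling the $2^{n}$ branches of $T_\varepsilon^{n}$ in $L^p$ for $p>1$ --- simply does not arise, because the contraction is obtained at $n=1$ and the iteration is done afterwards by the trivial bound on $L^{2p}$. Your route would also go through, but it is heavier and relies on an external ingredient the paper manages to avoid; the paper's identity $(P_Tf)'=P_T(\cdots)$ is the cleaner organizing principle here.
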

To prove Proposition \ref{proposition 3.3}, we apply Lemmas \ref{l:norm} and \ref{lemmaa 3.2}. We begin by establishing several auxiliary results, starting with a key lemma that verifies condition \eqref{3.3} in Lemma \ref{lemmaa 3.2}.

The transfer operator $P_{T_\varepsilon}$ associated with $T_\varepsilon$ satisfies the following pointwise representation
\begin{equation} \label{3.7}
   (P_{T_\varepsilon} f)(x) = 
\begin{cases} 
\sum\limits_{y \in T_\varepsilon^{-1}(x)} \frac{1}{|T_\varepsilon'(y)|} f(y) & \text{if } x \in [0, a_\varepsilon], \\
0 & \text{if } x \in [a_\varepsilon, 1].
\end{cases} 
\end{equation}

\begin{lemma}[Lasota-Yorke Type Inequality] \label{lemma:lyamda}
For some constants $0<\lambda<1$ and $M\geq 0$, the following inequality holds uniformly for all $\varepsilon\in[0,\delta)$ and $f\in W^{1,p}$,
$$
\left\|\left(P_{T_{\varepsilon}}f\right)'\right\|_p \leq \lambda\|f'\|_p + M\|f\|_{2p}.
$$
\end{lemma}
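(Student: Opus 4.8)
The plan is to combine the two-branch structure of $T_\varepsilon$ with the classical split of the derivative of a transfer operator into a contracting principal term and a lower-order term which gets absorbed into the weak norm $\|\cdot\|_{2p}$. Write $\psi_{i,\varepsilon}=T_{i,\varepsilon}^{-1}$, $i=0,1$, for the two inverse branches, each defined on $[0,a_\varepsilon]$ by (A1)--(A2), so that $(P_{T_\varepsilon}f)(x)=\sum_{i}\frac{f(\psi_{i,\varepsilon}(x))}{|T_\varepsilon'(\psi_{i,\varepsilon}(x))|}$ on $[0,a_\varepsilon]$ and $=0$ on $[a_\varepsilon,1]$, as in \eqref{3.7}. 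Differentiating this expression (using $|\psi_{i,\varepsilon}'|=1/|T_\varepsilon'(\psi_{i,\varepsilon})|$ and the chain rule) yields the pointwise bound
$$
\bigl|(P_{T_\varepsilon}f)'\bigr|\;\le\;P_{T_\varepsilon}\!\Bigl(\tfrac{|f'|}{|T_\varepsilon'|}\Bigr)\;+\;P_{T_\varepsilon}\!\Bigl(\tfrac{|T_\varepsilon''|}{|T_\varepsilon'|^{2}}\,|f|\Bigr)\qquad\text{on }[0,a_\varepsilon],
$$
and $0$ on $[a_\varepsilon,1]$. One first records that $P_{T_\varepsilon}f\in W^{1,p}(0,1)$: since $W^{1,p}\hookrightarrow C^{0}$ for $p>1$ and, by (A6), $|T_\varepsilon'(\psi_{i,\varepsilon}(x))|\to\infty$ as $x\to a_\varepsilon^{-}$, the density $P_{T_\varepsilon}f$ has no jump at $a_\varepsilon$, so the displayed quantity (extended by $0$ past $a_\varepsilon$) is its genuine weak derivative. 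It then suffices to bound the two right-hand terms in $L^p$, uniformly in $\varepsilon$.

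For the principal term I would estimate branch by branch. With $J_0=[0,c)$, $J_1=(c,1]$ the branch domains, the change of variables $y=\psi_{i,\varepsilon}(x)$, $dx=|T_\varepsilon'(y)|\,dy$ gives
$$
\int_{0}^{a_\varepsilon}\frac{|f'(\psi_{i,\varepsilon}(x))|^{p}}{|T_\varepsilon'(\psi_{i,\varepsilon}(x))|^{2p}}\,dx\;=\;\int_{J_i}\frac{|f'(y)|^{p}}{|T_\varepsilon'(y)|^{2p-1}}\,dy .
$$
Since $2p-1>0$ and $|T_\varepsilon'|\ge\theta>1$ by (A4), each integrand carries a factor $\le\theta^{-(2p-1)}$; moreover, near $c$ the weight $|T_\varepsilon'|^{-(2p-1)}$ is arbitrarily small by (A6)/(A8), so the inverse branch landing near the cusp contributes a negligible amount. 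Summing the two branch contributions — and, if $\theta$ is only slightly above $1$, first replacing $T_\varepsilon$ by a sufficiently high iterate $T_\varepsilon^{N}$ and using submultiplicativity of $\|\cdot\|_{W^{1,p}\to W^{1,p}}$ together with this cusp gain — produces $\|P_{T_\varepsilon}(|f'|/|T_\varepsilon'|)\|_{p}\le\lambda\|f'\|_{p}$ with some $\lambda<1$ independent of $\varepsilon$.

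For the lower-order term set $g_\varepsilon:=T_\varepsilon''/(T_\varepsilon')^{2}$. By (A6)--(A8), $|g_\varepsilon(y)|\lesssim|y-c|^{(\beta-1)-2\beta}=|y-c|^{-(\beta+1)}$ near $c$ with a constant uniform in $\varepsilon$, and since the hypothesis $\beta<\tfrac{1-2p}{2p}$ is exactly $2p(\beta+1)<1$, this yields $\sup_{\varepsilon\in[0,\delta)}\|g_\varepsilon\|_{L^{2p}}<\infty$. Hence, by Hölder's inequality with exponents $2,2$,
$$
\Bigl\|P_{T_\varepsilon}\!\bigl(\tfrac{|T_\varepsilon''|}{|T_\varepsilon'|^{2}}\,|f|\bigr)\Bigr\|_{p}\;\le\;\|P_{T_\varepsilon}\|_{L^{p}\to L^{p}}\,\|g_\varepsilon f\|_{p}\;\le\;\|P_{T_\varepsilon}\|_{L^{p}\to L^{p}}\,\|g_\varepsilon\|_{2p}\,\|f\|_{2p},
$$
and $\|P_{T_\varepsilon}\|_{L^{p}\to L^{p}}\le\|P_{T_\varepsilon}1\|_{\infty}^{1-1/p}\le(2/\theta)^{1-1/p}$ by interpolation together with (A1) and (A4). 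This gives the term $M\|f\|_{2p}$ with $M$ independent of $\varepsilon$, completing the inequality.

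The main obstacle is obtaining $\lambda<1$ uniformly in $\varepsilon$ for $p>1$: with only $|T_\varepsilon'|\ge\theta$ and two inverse branches, a plain triangle inequality produces a constant of order $2^{1-1/p}\theta^{-(2p-1)/p}$, which need not be less than $1$. The divergence of $T_\varepsilon'$ at the cusp must be used to show that the inverse branch landing near $c$ contributes essentially nothing to the $L^p$-norm of the principal term, so that in effect only one "regular" branch is seen on the rest of the interval; this is the $W^{1,p}$-analogue of the $L^1$-contractivity of the transfer operator that made the case $p=1$ of \cite{BG} work. A secondary point, guaranteed by the hypotheses, is to keep every constant ($\theta$, the bounds in (A8), the Sobolev embedding constant) uniform over $\varepsilon\in[0,\delta)$.
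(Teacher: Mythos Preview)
Your decomposition of $(P_{T_\varepsilon}f)'$ into a principal term $P_{T_\varepsilon}(f'/T_\varepsilon')$ and a lower-order term $P_{T_\varepsilon}\bigl((T_\varepsilon''/(T_\varepsilon')^{2})f\bigr)$, together with your H\"older estimate $\|g_\varepsilon f\|_{p}\le\|g_\varepsilon\|_{2p}\|f\|_{2p}$ for $g_\varepsilon=T_\varepsilon''/(T_\varepsilon')^{2}\in L^{2p}$ uniformly in $\varepsilon$, is exactly the paper's argument. The divergence is entirely in how you bound the principal term.

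The paper does \emph{not} perform a branch-by-branch change of variables. It writes, in one line,
\[
\Bigl\|P_{T_\varepsilon}\Bigl(\tfrac{1}{T_\varepsilon'}f'\Bigr)\Bigr\|_{p}\;\le\;\Bigl\|\tfrac{1}{T_\varepsilon'}f'\Bigr\|_{p}\;\le\;\Bigl\|\tfrac{1}{T_\varepsilon'}\Bigr\|_{\infty}\|f'\|_{p},
\]
i.e.\ it uses $\|P_{T_\varepsilon}\|_{L^{p}\to L^{p}}\le1$ and then sets $\lambda=\|1/T_\varepsilon'\|_\infty\le1/\theta<1$. There is no $2^{1-1/p}$ factor to fight. (You correctly note that interpolation only gives $\|P_{T_\varepsilon}\|_{L^{p}\to L^{p}}\le\|P_{T_\varepsilon}1\|_\infty^{1-1/p}$; the paper takes the contraction for granted.)

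Your branch-by-branch estimate, by contrast, manufactures the factor $2^{1-1/p}$ through the convexity inequality $|a+b|^{p}\le2^{p-1}(|a|^{p}+|b|^{p})$, and neither remedy you propose removes it. The cusp argument cannot work globally: for $x$ bounded away from $a_\varepsilon$, \emph{both} preimages $\psi_{0,\varepsilon}(x),\psi_{1,\varepsilon}(x)$ lie bounded away from $c$, so $|T_\varepsilon'|$ is merely $\ge\theta$ on each branch and neither contribution is ``negligible''; the cusp gain is localised near $x=a_\varepsilon$ only. The iteration argument yields a Lasota--Yorke inequality for $P_{T_\varepsilon}^{N}$ (which is indeed enough for the essential-spectral-radius bound via Lemma~\ref{l:norm}) but does not recover the single-step inequality with $\lambda<1$ that the lemma actually states.
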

\begin{proof}
By \eqref{3.7} for $f\in W^{1,p}$, the derivative of the transfer operator admits the following representation
\begin{equation} \label{3.8}
\left(P_{T_{\varepsilon}}f\right)'(x) = 
\begin{cases}
\sum\limits_{y\in T_{\varepsilon}^{-1}(x)} \left[\frac{f'(y)}{|T_{\varepsilon}'(y)|T_{\varepsilon}'(y)} - \frac{T_{\varepsilon}''(y)}{(T_{\varepsilon}'(y))^3}f(y)\cdot\mathrm{sign}(T_{\varepsilon}'(y))\right] &\text { if } x\in[0,a_{\varepsilon}], \\
0 &\text { if } x\in(a_{\varepsilon},1].
\end{cases}
\end{equation}
The following observations establish $P_{T_{\varepsilon}}f\in W^{1,p}$:
\begin{itemize}
\item The boundedness of $f$ and representation \eqref{3.7} yield $\lim\limits_{x\to a_{\varepsilon}}P_{T_{\varepsilon}}f(x)=0$;
\item Then $P_{T_{\varepsilon}} f$ is continuous and the derivative exists almost everywhere;
\item The critical term $\frac{T_{\varepsilon}''(y)}{(T_{\varepsilon}'(y))^3}\sim |y-c|^{-2\beta-1}$ as $y\to c$.
\end{itemize}
We decompose the derivative using \eqref{3.8}, we have
\begin{equation} \label{3.9}
   \left(P_{T_{\varepsilon}} f\right)^{\prime}=P_{T_{\varepsilon}}\left(\frac{1}{T_{\varepsilon}^{\prime}} f^{\prime}\right)-P_{T_{\varepsilon}}\left(\frac{T_{\varepsilon}^{\prime \prime}}{\left|T_{\varepsilon}^{\prime}\right| T_{\varepsilon}^{\prime}} f\right). 
\end{equation}
By \eqref{3.9} taking norms and applying the triangle inequality, we obtain
\begin{align*}
\left\|\left(P_{T_{\varepsilon}} f\right)'\right\|_p 
&\leq \left\|P_{T_{\varepsilon}}\left(\frac{1}{T_{\varepsilon}'} f'\right)\right\|_p 
    + \left\|P_{T_{\varepsilon}}\left(\frac{T_{\varepsilon}''}{(T_{\varepsilon}')^2} f\right)\right\|_p\leq \left\|\frac{1}{T_{\varepsilon}'} f'\right\|_p 
    + \left\|\frac{T_{\varepsilon}''}{(T_{\varepsilon}')^2} f\right\|_p \\
&\leq \left\|\frac{1}{T_{\varepsilon}'}\right\|_\infty \cdot \|f'\|_p 
    + \left\|\frac{T_{\varepsilon}''}{(T_{\varepsilon}')^2}\right\|_{2p} \cdot \|f\|_{2p}=\lambda \cdot \|f'\|_p + M \cdot \|f\|_{2p}.
\end{align*}
The uniform bounds follow from
\begin{itemize}
\item $\lambda := \left\|\frac{1}{T_{\varepsilon}'}\right\|_\infty < 1$ (by expansivity)
\item $M := \sup_{\varepsilon\in[0,\delta)}\left\|\frac{T_{\varepsilon}''}{(T_{\varepsilon}')^2}\right\|_{2p} < \infty$ (by assumptions (A4) and (A8)).
\end{itemize}
\end{proof}

\begin{lemma}[Second-Order Regularity Estimate] \label{lemma3.5}
There exists a constant $M \geqslant 0$ such that for all $\varepsilon \in [0,\delta)$ and $f \in W^{2,p}$, the transfer operator satisfies the following inequality
\begin{equation} \label{3.14}
\|P_{T_\varepsilon}f\|_{W^{2,p}} \leqslant \lambda^2\|f\|_{W^{2,p}} + M\|f\|_{W^{1,p}},
\end{equation}
where $\lambda$ is the contraction constant from Lemma \ref{lemma:lyamda}.
\end{lemma}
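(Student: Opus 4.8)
The plan is to differentiate the pointwise formula \eqref{3.8} once more and recast $(P_{T_\varepsilon}f)''$ in transfer–operator form. Differentiating \eqref{3.8} by the chain rule on each inverse branch $T_{i,\varepsilon}^{-1}$, $i=0,1$ (equivalently, applying \eqref{3.9} to $\tfrac{1}{T_\varepsilon'}f'$ in place of $f$ for the first summand, and differentiating the second summand directly — one cannot naively iterate \eqref{3.9} there, since $\tfrac{T_\varepsilon''}{(T_\varepsilon')^2}f\notin W^{1,p}$ in general because its derivative contains a non–$L^p$ term $\sim|x-c|^{-\beta-2}$), one arrives, a.e.\ on $[0,a_\varepsilon]$, at an identity of the form
\begin{equation*}
(P_{T_\varepsilon}f)'' \;=\; P_{T_\varepsilon}\!\left(\frac{1}{(T_\varepsilon')^2}\,f''\right) \;+\; P_{T_\varepsilon}\!\left(\alpha_\varepsilon\, f'\right) \;+\; P_{T_\varepsilon}\!\left(\beta_\varepsilon\, f\right),
\end{equation*}
with $\alpha_\varepsilon$ an algebraic expression in $\tfrac1{T_\varepsilon'},\tfrac{T_\varepsilon''}{(T_\varepsilon')^2}$ and $\beta_\varepsilon$ one in $\tfrac{T_\varepsilon'''}{(T_\varepsilon')^3},\tfrac{(T_\varepsilon'')^2}{(T_\varepsilon')^4}$. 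The first step is to read off the singularities of the three weights from (A6)--(A8): $\big|\tfrac1{(T_\varepsilon')^2}\big|\sim|x-c|^{-2\beta}$, $|\alpha_\varepsilon(x)|\sim|x-c|^{-2\beta-1}$, $|\beta_\varepsilon(x)|\sim|x-c|^{-2\beta-2}$, uniformly in $\varepsilon\in[0,\delta)$. Since $\beta<\tfrac{1-2p}{2p}\le-\tfrac12$ we have $-2\beta>1$, so the first two weights are (uniformly) bounded — in fact they vanish at $c$ — whereas $\beta_\varepsilon$ is unbounded but still lies in $L^p$: the hypothesis $\beta<\tfrac{1-2p}{2p}$ is precisely $(2\beta+2)p<1$, so $|x-c|^{-2\beta-2}\in L^p$, and by (A8) $\sup_{\varepsilon}\|\beta_\varepsilon\|_p<\infty$. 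These decay properties, together with the fact that $P_{T_\varepsilon}f$ and $(P_{T_\varepsilon}f)'$ are continuous and vanish at $a_\varepsilon$, also yield $P_{T_\varepsilon}f\in W^{2,p}$.

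Next I would estimate the three pieces in $L^p$, using the $L^p$–nonexpansiveness $\|P_{T_\varepsilon}g\|_p\le\|g\|_p$ already used in the proof of Lemma~\ref{lemma:lyamda}. The leading term gives the constant $\lambda^2$ with nothing lost:
\begin{equation*}
\left\|P_{T_\varepsilon}\!\left(\frac{1}{(T_\varepsilon')^2}f''\right)\right\|_p \;\le\; \left\|\frac{1}{(T_\varepsilon')^2}\right\|_\infty\|f''\|_p \;=\; \left\|\frac{1}{T_\varepsilon'}\right\|_\infty^{2}\|f''\|_p \;=\; \lambda^2\|f''\|_p .
\end{equation*}
For the first–order term, boundedness of $\alpha_\varepsilon$ gives $\|P_{T_\varepsilon}(\alpha_\varepsilon f')\|_p\le C_1\|f'\|_p\le C_1\|f\|_{W^{1,p}}$ with $C_1=\sup_\varepsilon\|\alpha_\varepsilon\|_\infty<\infty$. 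The zeroth–order term is the delicate one: a Hölder split $\|\beta_\varepsilon f\|_p\le\|\beta_\varepsilon\|_{2p}\|f\|_{2p}$ is not available, because $\beta_\varepsilon\in L^{2p}$ would require the strictly stronger condition $\beta<\tfrac{1-4p}{4p}$; instead I would use the Sobolev embedding $W^{1,p}(I)\hookrightarrow L^\infty(I)$ on the bounded interval $I=(0,1)$ to write $\|P_{T_\varepsilon}(\beta_\varepsilon f)\|_p\le\|\beta_\varepsilon\|_p\|f\|_\infty\le C_2\|f\|_{W^{1,p}}$. Altogether $\|(P_{T_\varepsilon}f)''\|_p\le\lambda^2\|f''\|_p+(C_1+C_2)\|f\|_{W^{1,p}}$ with $C_1,C_2$ independent of $\varepsilon$.

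Finally I would assemble the $W^{2,p}$–norm. From Lemma~\ref{lemma:lyamda} and $\|P_{T_\varepsilon}f\|_p\le\|f\|_p$ one has $\|P_{T_\varepsilon}f\|_{W^{1,p}}\le\|f\|_p+\lambda\|f'\|_p+M\|f\|_{2p}$; writing $\lambda\|f'\|_p=\lambda^2\|f'\|_p+\lambda(1-\lambda)\|f'\|_p$ and using $\|f\|_{2p}\le\|f\|_\infty\le C\|f\|_{W^{1,p}}$ turns this into $\|P_{T_\varepsilon}f\|_{W^{1,p}}\le\lambda^2\|f'\|_p+C_3\|f\|_{W^{1,p}}$. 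Adding the bound on $\|(P_{T_\varepsilon}f)''\|_p$ and using $\lambda^2\|f''\|_p+\lambda^2\|f'\|_p\le\lambda^2\|f\|_{W^{2,p}}$ gives \eqref{3.14} with $M:=C_1+C_2+C_3$, uniform in $\varepsilon$. The single genuinely delicate point is the zeroth–order coefficient $\beta_\varepsilon$: it carries the worst singularity $|x-c|^{-2\beta-2}$ occurring anywhere in the second derivative, its membership in $L^p$ is exactly what the range $\beta\in(-1,\tfrac{1-2p}{2p})$ buys, and one is forced to control $f$ through $\|f\|_\infty$ (hence through $W^{1,p}$, by Sobolev embedding) rather than by a naive Hölder split.
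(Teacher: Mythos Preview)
Your proof follows the same overall strategy as the paper: differentiate \eqref{3.9} once more, write $(P_{T_\varepsilon}f)''$ as $P_{T_\varepsilon}$ applied to $(T_\varepsilon')^{-2}f''$ plus lower-order terms with weights, read off the singularities of those weights from (A6)--(A8), and estimate in $L^p$. The leading term and the first-order weight are treated identically in both proofs, and the final assembly into the $W^{2,p}$ inequality is routine.

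The one substantive difference is in the zeroth-order term, and here your treatment is in fact sharper than the paper's. The paper asserts that the weights $g_2,g_3,g_4\sim|y-c|^{-2\beta-2}$ lie in $L^{2p}$ and then H\"older-splits as $\|g_i\|_{2p}\|f\|_{2p}$, invoking the embedding $W^{1,p}\hookrightarrow L^{2p}$. But $|y-c|^{-2\beta-2}\in L^{2p}$ requires $\beta<\tfrac{1-4p}{4p}$, which is strictly stronger than the standing assumption $\beta<\tfrac{1-2p}{2p}$; so the paper's split is not justified over the full stated range of $\beta$. You correctly observe this and instead use $\|\beta_\varepsilon f\|_p\le\|\beta_\varepsilon\|_p\|f\|_\infty$ together with $W^{1,p}(0,1)\hookrightarrow L^\infty$, which works precisely when $(2\beta+2)p<1$, i.e.\ exactly under the hypothesis $\beta<\tfrac{1-2p}{2p}$. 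So your argument is not merely a variant but actually closes a gap in the paper's estimate, at no extra cost.
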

\begin{proof}
Differentiating \eqref{3.9} yields the following expression for the second derivative:
\begin{equation} \label{3.15}
\begin{aligned} 
\left(P_{T_{\varepsilon}} f\right)^{\prime \prime} &= \left[P_{T_{\varepsilon}}\left(\frac{1}{T_{\varepsilon}^{\prime}} f^{\prime}\right) - P_{T_{\varepsilon}}\left(\frac{T_{\varepsilon}^{\prime \prime}}{\left|T_{\varepsilon}^{\prime}\right| \cdot T_{\varepsilon}^{\prime}} f\right)\right]^{\prime} \\
&= P_{T_{\varepsilon}}\left(\frac{1}{T_{\varepsilon}^{\prime}} \left(\frac{1}{T_{\varepsilon}^{\prime}} f^{\prime}\right)^{\prime}\right) - P_{T_{\varepsilon}}\left(\frac{T_{\varepsilon}^{\prime \prime}}{\left|T_{\varepsilon}^{\prime}\right| \cdot T_{\varepsilon}^{\prime}} \cdot \frac{1}{T_{\varepsilon}^{\prime}} f^{\prime}\right) \\
&\quad - P_{T_{\varepsilon}}\left(\frac{1}{T_{\varepsilon}^{\prime}} \left(\frac{T_{\varepsilon}^{\prime \prime}}{\left|T_{\varepsilon}^{\prime}\right| \cdot T_{\varepsilon}^{\prime}} f\right)^{\prime}\right) + P_{T_{\varepsilon}}\left(\frac{T_{\varepsilon}^{\prime \prime}}{\left|T_{\varepsilon}^{\prime}\right|\cdot T_{\varepsilon}^{\prime}} \cdot \frac{T_{\varepsilon}^{\prime \prime}}{\left|T_{\varepsilon}^{\prime}\right| \cdot T_{\varepsilon}^{\prime}} f\right)
\end{aligned}
\end{equation}
The first two terms simplify as
\begin{equation} \label{3.16}
\begin{aligned}
& P_{T_{\varepsilon}}\left(\frac{1}{T_{\varepsilon}^{\prime}} \left(\frac{-T_{\varepsilon}^{\prime \prime}}{(T_{\varepsilon}^{\prime})^{2}} f^{\prime} + \frac{1}{T_{\varepsilon}^{\prime}} f^{\prime \prime}\right)\right) - P_{T_{\varepsilon}}\left(\frac{T_{\varepsilon}^{\prime \prime}}{\left|T_{\varepsilon}^{\prime}\right| \cdot T_{\varepsilon}^{\prime}} \cdot \frac{1}{T_{\varepsilon}^{\prime}} f^{\prime}\right) \\
&= P_{T_{\varepsilon}}\left(\left(\frac{1}{T_{\varepsilon}^{\prime}}\right)^{2} f^{\prime \prime}\right) - P_{T_{\varepsilon}}\left(\left[\frac{T_{\varepsilon}^{\prime \prime}}{(T_{\varepsilon}^{\prime})^{2}} + \frac{T_{\varepsilon}^{\prime \prime}}{\left|T_{\varepsilon}^{\prime}\right| \cdot T_{\varepsilon}^{\prime}}\right] \cdot \frac{1}{T_{\varepsilon}^{\prime}} f^{\prime}\right)
\end{aligned} 
\end{equation}
while the remaining terms become
\begin{equation} \label{3.17}
\begin{aligned}
& P_{T_{\varepsilon}}\left(\frac{1}{T_{\varepsilon}^{\prime}} \left[\left(\frac{T_{\varepsilon}^{\prime \prime \prime} \cdot |T_{\varepsilon}^{\prime}|  \cdot T_{\varepsilon}^{\prime} - 2\cdot|T_{\varepsilon}^{\prime}| \cdot (T_{\varepsilon}^{\prime \prime})^2}{(T_{\varepsilon}^{\prime})^{4}}\right) f + \left(\frac{T_{\varepsilon}^{\prime \prime}}{|T_{\varepsilon}^{\prime}| \cdot T_{\varepsilon}^{\prime}}\right) f^{\prime}\right]\right)
\end{aligned}
\end{equation}

\textit{Asymptotic Behavior of Coefficients.}
For $\beta \in \left(-1, \frac{1-2p}{2p}\right)$, assumptions (A4), (A6), and (A8) imply the following asymptotic behavior as $y \to c$:

\begin{itemize}
    \item $\left(\frac{1}{T_{\varepsilon}^{\prime}}\right)^{2} \in L^{\infty}$;
    \item $g_{1,a} := \frac{T_{\varepsilon}^{\prime \prime}}{(|T_{\varepsilon}^{\prime}|)^3} = \mathcal{O}(|y-c|^{-2\beta-1}) \in L^{\infty}$;
    \item $g_{1,b} := \frac{T_{\varepsilon}^{\prime \prime}}{(T_{\varepsilon}^{\prime})^{3}}= \mathcal{O}(|y-c|^{-2\beta-1}) \in L^{\infty}$;
    \item $g_{2} := \frac{1}{T_{\varepsilon}^{\prime}}\left(\frac{T_{\varepsilon}^{\prime \prime \prime}| \cdot T_{\varepsilon}^{\prime}| \cdot T_{\varepsilon}^{\prime}}{(T_{\varepsilon}^{\prime})^{4}}\right) = \mathcal{O}(|y-c|^{-2\beta-2}) \in L^{2p}$;
    \item $g_{3} := \frac{1}{T_{\varepsilon}^{\prime}}\left(\frac{2(|T_{\varepsilon}^{\prime}| \cdot T_{\varepsilon}^{\prime \prime}) \cdot T_{\varepsilon}^{\prime \prime}}{(T_{\varepsilon}^{\prime})^{4}}\right) = \mathcal{O}(|y-c|^{-2\beta-2}) \in L^{2p}$;
    \item $g_{4} := \frac{T_{\varepsilon}^{\prime \prime}}{|T_{\varepsilon}^{\prime}| \cdot T_{\varepsilon}^{\prime}} \cdot \frac{T_{\varepsilon}^{\prime \prime}}{|T_{\varepsilon}^{\prime}| \cdot T_{\varepsilon}^{\prime}} = \mathcal{O}(|y-c|^{-2\beta-2}) \in L^{2p}$.
\end{itemize}
Moreover, assumption (A8) guarantees the uniform boundedness
$$
\sup_{\varepsilon \in [0,\delta)} \left(\|g_{1,a}\|_{\infty} + \|g_{1,b}\|_{\infty} + \|g_{2}\|_{2p} + \|g_{3}\|_{2p} + \|g_{4}\|_{2p}\right) < \infty.
$$

\textit{Norm Estimates.}
Combining \eqref{3.15}-\eqref{3.17} yields the second derivative expression
$$
\left(P_{T_{\varepsilon}} f\right)^{\prime \prime} = P_{T_{\varepsilon}}\left(\left(\frac{1}{T_{\varepsilon}^{\prime}}\right)^{2} f^{\prime \prime}\right) - P_{T_{\varepsilon}}\left([2g_{1,a} + g_{1,b}] f^{\prime}\right) - P_{T_{\varepsilon}}\left((g_{2}-g_{3}-g_{4}) f\right),
$$
which leads to the key estimate
\begin{align*}
\left\|\left(P_{T_{\varepsilon}} f\right)^{\prime \prime}\right\|_{p} \leq \lambda^{2}\|f^{\prime \prime}\|_{p} + \|[2g_{1,a} + g_{1,b}]\|_{\infty} \cdot \|f^{\prime}\|_{p}  + \|g_{2}-g_{3}-g_{4}\|_{2p} \cdot \|f\|_{2p}.    
\end{align*}

Since $W^{1,p}$ embeds continuously into $L^{2p}$ on $[0,1]$, and the coefficients $g_i$ are uniformly bounded, there exists $M \geq 0$ such that
\begin{align*}
\|P_{T_{\varepsilon}} f\|_{W^{2,p}} \leq \lambda^{2}\|f\|_{W^{2,p}} + M\|f\|_{W^{1,p}}     
\end{align*} 
for all  $\varepsilon \in [0,\delta)$.
\end{proof}
\textit{Continuity of Transfer Operators in $L^{2p}.$ }We now establish the continuity of the transfer operators in the $L^{2p}$ norm.

\begin{lemma}\label{lemma3.6}
For every $\varepsilon \in [0,\delta)$, the transfer operator $P_{T_\varepsilon}$ satisfies the following uniform bound
\begin{equation}\label{3.20}
\|P_{T_\varepsilon}\|_{L^{2p} \to L^{2p}} \leq 2.
\end{equation}
\end{lemma}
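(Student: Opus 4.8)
The plan is to reduce the bound to a branch-by-branch change of variables. By (A1) the map $T_\varepsilon$ has exactly two full, strictly monotone branches, $T_{0,\varepsilon}=T_\varepsilon|_{[0,c)}$ and $T_{1,\varepsilon}=T_\varepsilon|_{(c,1]}$, each $C^3$ away from $c$ by (A3), with $C^3$ inverse branches $\psi_{0,\varepsilon},\psi_{1,\varepsilon}$ defined on $[0,a_\varepsilon]$ (cf.\ (A2)). First I would split the operator accordingly, writing $P_{T_\varepsilon}=P_{0,\varepsilon}+P_{1,\varepsilon}$, where for $i\in\{0,1\}$
\[
(P_{i,\varepsilon}f)(x)=\frac{f(\psi_{i,\varepsilon}(x))}{|T_\varepsilon'(\psi_{i,\varepsilon}(x))|}\quad\text{for }x\in[0,a_\varepsilon],\qquad (P_{i,\varepsilon}f)(x)=0\quad\text{otherwise},
\]
which is just a regrouping of the sum in \eqref{3.7} over the (at most two) preimages of $x$.

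Next I would estimate each piece in $L^{2p}$ separately. For fixed $i$ one has
\[
\|P_{i,\varepsilon}f\|_{2p}^{2p}=\int_0^{a_\varepsilon}\frac{|f(\psi_{i,\varepsilon}(x))|^{2p}}{|T_\varepsilon'(\psi_{i,\varepsilon}(x))|^{2p}}\,dx ,
\]
and after the substitution $y=\psi_{i,\varepsilon}(x)$, i.e.\ $x=T_\varepsilon(y)$ and $dx=|T_\varepsilon'(y)|\,dy$, with $y$ ranging over $D_0:=[0,c)$ for $i=0$ and over $D_1:=(c,1]$ for $i=1$, this becomes
\[
\|P_{i,\varepsilon}f\|_{2p}^{2p}=\int_{D_i}\frac{|f(y)|^{2p}}{|T_\varepsilon'(y)|^{2p-1}}\,dy .
\]
Here I would invoke (A4): since $|T_\varepsilon'(y)|\geq\theta>1$ for a.e.\ $y$ and all $\varepsilon\in[0,\delta)$, and $2p-1\geq 1>0$ because $p>1$, we have $|T_\varepsilon'(y)|^{2p-1}\geq 1$, hence $\|P_{i,\varepsilon}f\|_{2p}^{2p}\leq\int_{D_i}|f(y)|^{2p}\,dy\leq\|f\|_{2p}^{2p}$, i.e.\ $\|P_{i,\varepsilon}f\|_{2p}\leq\|f\|_{2p}$ for $i=0,1$. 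Finally the triangle inequality gives
\[
\|P_{T_\varepsilon}f\|_{2p}\le\|P_{0,\varepsilon}f\|_{2p}+\|P_{1,\varepsilon}f\|_{2p}\le 2\|f\|_{2p},
\]
which is \eqref{3.20}. One could keep the factor $\theta^{-(2p-1)/(2p)}<1$ in each branch and obtain the strictly smaller constant $2\theta^{-(2p-1)/(2p)}$, but $2$ suffices for the later arguments.

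The argument is essentially routine, and there is no serious obstacle; the only point needing a word of justification is the change of variables across the singular point $c$. This is harmless: $\{c\}$ is a null set, on $D_i\setminus\{c\}$ the inverse branch $\psi_{i,\varepsilon}$ is a $C^1$ diffeomorphism onto its image by (A3), and the integrand $|f(y)|^{2p}\,|T_\varepsilon'(y)|^{-(2p-1)}$ is, if anything, better behaved as $y\to c$ because $|T_\varepsilon'(y)|\to\infty$ by (A6); so the integral over $D_i$ is unambiguous and finite for $f\in L^{2p}$.
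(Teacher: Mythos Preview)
Your proof is correct and follows essentially the same strategy as the paper: decompose $P_{T_\varepsilon}$ into its two branch contributions, then use the uniform expansion $|T_\varepsilon'|\ge\theta>1$ from (A4) to absorb the excess Jacobian factors. Your execution is in fact a little more direct: you perform the change of variables $y=\psi_{i,\varepsilon}(x)$ immediately to obtain $\|P_{i,\varepsilon}f\|_{2p}^{2p}=\int_{D_i}|f|^{2p}/|T_\varepsilon'|^{2p-1}\,dy$, whereas the paper first pulls out a factor $\sup|T_\varepsilon'|^{-p}$ and rewrites what remains as $\|P_{T_\varepsilon}(f^2\,1_{[0,c)})\|_{L^p}^p$ before concluding---an intermediate step that itself unwinds to the same change-of-variables argument.
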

\begin{proof}
From the pointwise representation \eqref{3.7}, we decompose the operator norm as:
$$
\|P_{T_\varepsilon}\|_{L^{2p}} \leq \|\psi_{1,\varepsilon}\|_{L^{2p}} + \|\psi_{2,\varepsilon}\|_{L^{2p}},
$$
where the components are defined by
\begin{equation}
\begin{aligned}
\psi_{1,\varepsilon}(x) &:= \left(\frac{1}{|T_\varepsilon'|} \cdot f\right) \circ T_{0,\varepsilon}^{-1}(x) \cdot 1_{T_{0,\varepsilon}[0,c)}(x), \\
\psi_{2,\varepsilon}(x) &:= \left(\frac{1}{|T_\varepsilon'|} \cdot f\right) \circ T_{1,\varepsilon}^{-1}(x) \cdot 1_{T_{1,\varepsilon}(c,1]}(x).
\end{aligned}
\end{equation}

For $\psi_{1,\varepsilon}$, we compute
\begin{align*}
\int_{[0,1]} \psi_{1,\varepsilon}^{2p} \,d\mu &= \int_{[0,a_\varepsilon]} \frac{[f(T_{0,\varepsilon}^{-1}(x))]^{2p}}{|T_\varepsilon'(T_{0,\varepsilon}^{-1}(x))|^{2p}} \,dx \\
&\leq \sup_{x \in [0,1]} \frac{1}{|T'(x)|^p} \cdot \|P_{T_\varepsilon}(f^2 \cdot 1_{[0,c)})\|_{L^p}^p \\
&\leq \|f\|_{L^{2p}}^{2p}.
\end{align*}

An identical argument applies to $\psi_{2,\varepsilon}$. Combining these estimates yields
$$
\|P_{T_\varepsilon}\|_{L^{2p}}  \leq \|\psi_{1,\varepsilon}\|_{L^{2p}} + \|\psi_{2,\varepsilon}\|_{L^{2p}} \leq 2\|f\|_{L^{2p}}
$$
which establishes the claimed bound.
\end{proof}

\begin{proof}[Proof of Proposition \ref{proposition 3.3}]
The argument proceeds in three steps:

\textit{Spectral analysis}: By the Rellich-Kondrakov theorem the embedding $W^{1,p} \hookrightarrow L^{2p}$ is compact.  This, combined with  Lemma \ref{lemma3.6},  the Lasota-Yorke inequalities from Lemma \ref{lemma:lyamda} and Lemma \ref{l:norm} imply that the essential spectral radius of $P_{T_\varepsilon}$ bound $\rho_{\text{ess}} \leq \lambda < 1$. Consequently, any spectral element with modulus exceeding $\lambda$ must be an isolated eigenvalue.

\textit{Perron-Frobenius structure}: The spectral radius of $P_{T_\varepsilon}$ on $W^{1,p}$ equals 1, with no other eigenvalues on the unit circle. The simplicity of this eigenvalue follows from standard arguments (see \cite{BS}), implying the existence of a unique invariant density $h_\varepsilon \in W^{1,p}$.

\textit{Regularity and uniformity}: Since $h_\varepsilon$ is the unique invariant density in both $W^{1,p}$ and $W^{2,p}$, Lemma \ref{lemma3.5} provides the uniform $W^{2,p}$ bound
    \begin{align*}\label{eq:uniform-bound}
        \sup_{\varepsilon \in [0,\delta)} \|h_\varepsilon\|_{W^{2,p}} < \infty.
    \end{align*}
To establish \eqref{3.6}, we apply Lemma \ref{lemmaa 3.2} to $P_{T_\varepsilon}$ acting on $W_0^{1,p}$, taking $\|\cdot\|_{L^{2p}}$ as the weak norm. Three observations are crucial:
\begin{itemize}
    \item The spectral exclusion: $1 \notin \sigma(P_{T_0}|_{W_0^{1,p}})$
    \item The essential spectrum bound: $\rho_{\text{ess}}(P_{T_\varepsilon}) \leq \lambda$ (by lemma \ref{l:norm})
    \item The uniform Lasota-Yorke inequality from lemma \ref{lemma:lyamda}
\end{itemize}

Iterating the Lasota-Yorke inequality with \eqref{3.20} yields the key estimate
\begin{equation}\label{3.22}
\|P_{T_\varepsilon}^n f\|_{W^{1,p}} \leq \lambda^n \|f\|_{W^{1,p}} + M2^n \|f\|_{L^{2p}},
\end{equation}
which verifies condition \eqref{3.2}. The convergence requirement \eqref{3.3} follows directly from assumption (A9) via
$$
\|P_{T_\varepsilon} - P_{T_0}\|_{W^{1,p} \to L^{2p}} = \sup_{\|f\|_{W^{1,p}} \leq 1} \|(P_{T_\varepsilon} - P_{T_0})f\|_{L^{2p}} \to 0 \quad \text{as  } \varepsilon \to 0,
$$
as stated in \eqref{2.1}. Lemma \ref{lemmaa 3.2} therefore applies, establishing \eqref{3.6}.
\end{proof}
\subsection{Linear Response Formula} \label{section 3.2}

We now establish the differentiability of the invariant densities with respect to the perturbation parameter.

\begin{lemma}
The map $\varepsilon \mapsto h_\varepsilon$ is Fréchet differentiable at $\varepsilon = 0$ as a mapping into $L^p([0,1])$. Specifically, the following expansion holds
\begin{align*}
    h_\varepsilon = h_0 + \varepsilon (I - P_{T_0})^{-1}q + o(\varepsilon) \text{ as } \varepsilon \to 0,
\end{align*}
where the remainder term satisfies $\|o(\varepsilon)\|_{L^p} = o(\varepsilon)$ as $\varepsilon \to 0$, and the linear response kernel $q$ is given by
\begin{equation} \label{3.23}
q(x) = \begin{cases}
P_{T_0}\left[A_0 h_0' + B_0 h_0\right](x) & \text{for } x \in [0,a_0), \\
0 & \text{for } x \in [a_0,1],
\end{cases}
\end{equation}
with coefficients defined as
\begin{align*}
A_\varepsilon = -\left(\frac{\partial_\varepsilon T_\varepsilon}{T_\varepsilon'}\right) \text{  and  }
B_\varepsilon = \left(\frac{\partial_\varepsilon T_\varepsilon \cdot T_\varepsilon''}{(T_\varepsilon')^2} - \frac{\partial_\varepsilon T_\varepsilon'}{T_\varepsilon'}\right).
\end{align*}
\end{lemma}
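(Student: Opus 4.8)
The plan is to derive the linear response formula by differentiating the fixed-point equation $h_\varepsilon = P_{T_\varepsilon} h_\varepsilon$ and exploiting the uniform resolvent bounds established in Proposition~\ref{proposition 3.3}. First I would write, for each small $\varepsilon$, the identity $h_\varepsilon - h_0 = P_{T_\varepsilon} h_\varepsilon - P_{T_0} h_0 = (P_{T_\varepsilon} - P_{T_0}) h_0 + P_{T_\varepsilon}(h_\varepsilon - h_0)$, and hence $(\mathrm{Id} - P_{T_\varepsilon})(h_\varepsilon - h_0) = (P_{T_\varepsilon} - P_{T_0}) h_0$. Note that both sides lie in $W_0^{1,p}$, since $h_\varepsilon, h_0$ are probability densities so their difference integrates to zero, and $P_{T_\varepsilon}$ preserves integrals. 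Applying the uniform bound \eqref{3.6} gives $\|h_\varepsilon - h_0\|_{W^{1,p}} \leq C \|(P_{T_\varepsilon} - P_{T_0}) h_0\|_{W^{1,p}}$. By assumption (A9), equation~\eqref{2.2}, with $f = h_0 \in W^{2,p}$, the right-hand side is $\varepsilon \|\partial_\varepsilon P_{T_\varepsilon} h_0|_{\varepsilon=0}\|_{W^{1,p}} + o(\varepsilon) = O(\varepsilon)$, so already we have the a priori estimate $\|h_\varepsilon - h_0\|_{W^{1,p}} = O(\varepsilon)$, which in particular gives $\|h_\varepsilon - h_0\|_{L^p} = O(\varepsilon)$ and statistical stability.

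Next I would identify the derivative. Write $(\mathrm{Id} - P_{T_\varepsilon})(h_\varepsilon - h_0) = \varepsilon\, \tilde q + r(\varepsilon)$, where $\tilde q := \partial_\varepsilon P_{T_\varepsilon} h_0|_{\varepsilon=0}$ and $\|r(\varepsilon)\|_{W^{1,p}} = o(\varepsilon)$ by \eqref{2.2}. I would like to replace $(\mathrm{Id} - P_{T_\varepsilon})^{-1}$ by $(\mathrm{Id} - P_{T_0})^{-1}$: indeed,
\begin{align*}
h_\varepsilon - h_0 &= (\mathrm{Id} - P_{T_\varepsilon})^{-1}(\varepsilon \tilde q + r(\varepsilon)) \\
&= \varepsilon (\mathrm{Id} - P_{T_0})^{-1} \tilde q + \varepsilon\big[(\mathrm{Id} - P_{T_\varepsilon})^{-1} - (\mathrm{Id} - P_{T_0})^{-1}\big]\tilde q + (\mathrm{Id} - P_{T_\varepsilon})^{-1} r(\varepsilon).
\end{align*}
The last term has $L^p$-norm (indeed $W^{1,p}$-norm) bounded by $C\|r(\varepsilon)\|_{W^{1,p}} = o(\varepsilon)$ using \eqref{3.6}. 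For the middle term, \eqref{3.5} of Lemma~\ref{lemmaa 3.2} (applied as in the proof of Proposition~\ref{proposition 3.3}, with weak norm $L^{2p} \supset L^p$) gives $\|(\mathrm{Id} - P_{T_\varepsilon})^{-1} - (\mathrm{Id} - P_{T_0})^{-1}\|_{W_0^{1,p} \to L^{2p}} \to 0$, so that term is $o(\varepsilon)$ in $L^p$. Therefore $h_\varepsilon = h_0 + \varepsilon (\mathrm{Id} - P_{T_0})^{-1}\tilde q + o(\varepsilon)$ in $L^p$, and it remains to show $\tilde q = q$ with $q$ as in \eqref{3.23}.

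Finally I would compute $\tilde q = \partial_\varepsilon(P_{T_\varepsilon} h_0)|_{\varepsilon=0}$ explicitly. Using the pointwise formula \eqref{3.7}, on the branch over $[0,a_\varepsilon]$ one has, for a fixed preimage $y = y_\varepsilon(x)$ of $x$ under a branch $T_{j,\varepsilon}$, the contribution $h_0(y_\varepsilon(x))/|T_\varepsilon'(y_\varepsilon(x))|$; differentiating in $\varepsilon$ and using $\partial_\varepsilon y_\varepsilon = -\partial_\varepsilon T_\varepsilon / T_\varepsilon'$ (from differentiating $T_\varepsilon(y_\varepsilon(x)) = x$) produces exactly the combination $P_{T_0}[A_0 h_0' + B_0 h_0]$ with $A_0, B_0$ as stated — this is the standard Ruelle-type computation where the $A_0 h_0'$ term comes from the perturbation of the preimage and the $B_0 h_0$ term collects the perturbation of the Jacobian factor $1/|T_\varepsilon'|$ together with the chain-rule contribution. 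Off the image, i.e.\ on $[a_0,1]$, the operator is zero so $\tilde q = 0$ there, modulo the subtlety that $a_\varepsilon$ itself moves; since $a_\varepsilon \to a_0$ this only affects a shrinking neighborhood of $a_0$ whose contribution vanishes in the $L^p$ limit, consistent with the piecewise definition of $q$. The main obstacle I anticipate is precisely this bookkeeping at the moving endpoint $a_\varepsilon$ and the justification that the formal $\varepsilon$-derivative of $P_{T_\varepsilon} h_0$ computed pointwise coincides with the abstract $W^{1,p}$-derivative $\tilde q$ postulated in (A9); however, assumption (A9)/\eqref{2.2} is designed to supply exactly this, so the argument reduces to matching the explicit pointwise derivative with the right-hand side of \eqref{2.2}, which is the routine (if delicate) transfer-operator differentiation using the singularity exponents from (A6)–(A8) to control integrability of $A_0 h_0'$ and $B_0 h_0$ in $L^p$.
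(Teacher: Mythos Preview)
Your proposal is correct and follows essentially the same route as the paper: both arguments rewrite the fixed-point equation as $h_\varepsilon-h_0=(\mathrm{Id}-P_{T_\varepsilon})^{-1}(P_{T_\varepsilon}-P_{T_0})h_0$, invoke (A9)/\eqref{2.2} to expand $(P_{T_\varepsilon}-P_{T_0})h_0=\varepsilon q+o(\varepsilon)$ in $W^{1,p}$, use the uniform resolvent bound \eqref{3.6} on the $o(\varepsilon)$ piece, and then appeal to the Keller--Liverani resolvent continuity \eqref{3.5} (with weak norm $L^{2p}$) to replace $(\mathrm{Id}-P_{T_\varepsilon})^{-1}$ by $(\mathrm{Id}-P_{T_0})^{-1}$ at cost $o(\varepsilon)$ in $L^p$; the explicit identification of $q$ via $\partial_\varepsilon g_{j,\varepsilon}=A_\varepsilon\circ g_{j,\varepsilon}$ and $\partial_\varepsilon g_{j,\varepsilon}'=B_\varepsilon\circ g_{j,\varepsilon}\,g_{j,\varepsilon}'$ is also identical. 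Your write-up is in fact slightly more explicit than the paper's (you spell out the three-term splitting of the resolvent difference and flag the moving-endpoint issue at $a_\varepsilon$, correctly noting that (A9) absorbs it), but the mathematical content is the same.
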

\begin{proof}
We proceed with the following notation:
\begin{align*}
H_{\varepsilon} &:= P_{T_{\varepsilon}} - P_{T_{0}}; \quad
G_{\varepsilon} := (I - P_{T_{\varepsilon}})^{-1}.
\end{align*}

The spectral gap of $P_{T_{\varepsilon}}$ in $W^{1,p}$ implies exponential contraction on $W_{0}^{1,p}$, yielding the well-defined relation:
\begin{equation} \label{3.24}
h_{\varepsilon} = G_{\varepsilon} H_{\varepsilon} h_{0} + h_{0}.
\end{equation}
Since $h_{0} \in W^{2,p}$, assumption (A9) gives the Taylor expansion:
\begin{equation} \label{3.25}
\|P_{T_{\varepsilon}} h_{0} - P_{T_{0}} h_{0} - \varepsilon (\partial_{\varepsilon} P_{T_{\varepsilon}} h_{0}|_{\varepsilon=0})\|_{W^{1,p}} = o(\varepsilon),
\end{equation}
which we rewrite as:
\begin{equation} \label{3.26}
H_{\varepsilon} h_{0} = \varepsilon q + o(\varepsilon),
\end{equation}
where $q \in W_{0}^{1,p}$ and the error is in $W^{1,p}$.

To derive $q$ explicitly, fix $x \in [0,a_{\varepsilon})$ and let $g_{j,\varepsilon} := T_{j,\varepsilon}^{-1}(x)$. Differentiating the composition:
\begin{align*}
\partial_{\varepsilon} (h_{0} \circ g_{j,\varepsilon} g_{j,\varepsilon}^{\prime}) 
&= (\partial_{\varepsilon} (h_{0} \circ g_{j,\varepsilon})) g_{j,\varepsilon}^{\prime} + h_{0} \circ g_{j,\varepsilon} \partial_{\varepsilon} g_{j,\varepsilon}^{\prime} \\
&= h_{0}^{\prime} \circ g_{j,\varepsilon} (\partial_{\varepsilon} g_{j,\varepsilon}) g_{j,\varepsilon}^{\prime} + h_{0} \circ g_{j,\varepsilon} \partial_{\varepsilon} g_{j,\varepsilon}^{\prime}.
\end{align*}
From the identity $T_{\varepsilon} \circ g_{j,\varepsilon}(x) = x$, differentiation yields:
\begin{align*}
T_{\varepsilon}^{\prime} \circ g_{j,\varepsilon} \partial_{\varepsilon} g_{j,\varepsilon} + \partial_{\varepsilon} T_{\varepsilon} \circ g_{j,\varepsilon} &= 0, \\
\Rightarrow \partial_{\varepsilon} g_{j,\varepsilon} &= A_{\varepsilon} \circ g_{j,\varepsilon}, \\
\partial_{\varepsilon} g_{j,\varepsilon}^{\prime} &= B_{\varepsilon} \circ g_{j,\varepsilon} g_{j,\varepsilon}^{\prime},
\end{align*}
which determines $q$ as in \eqref{3.23}.

The uniform spectral gap ensures $G_{\varepsilon}$ is uniformly bounded in $\mathcal{L}(W_{0}^{1,p}, W^{1,p})$, giving:
\begin{equation} \label{3.27}
G_{\varepsilon} H_{\varepsilon} h_{0} = \varepsilon G_{\varepsilon} q + o(\varepsilon),
\end{equation}
with the error in $W^{1,p}$. Applying the stability result from \cite{Keller and Liverani}:
\begin{equation} \label{3.28}
\lim_{\varepsilon \to 0} \|G_{\varepsilon}(q) - G_{0}(q)\|_{L^{p}} = 0.
\end{equation}

Combining \eqref{3.24}, \eqref{3.26}, and \eqref{3.28} yields the $L^p$ expansion:
$$
h_{\varepsilon} = h_{0} + \varepsilon G_{0}(q) + o(\varepsilon),
$$
proving the $L^p$-differentiability at $\varepsilon = 0$.
\end{proof}

\section{A family of maps satisfying assumptions (A1)-(A9)}
A particular family of maps that verifies all hypotheses (A1)-(A9) is explicitly constructed here. Let $\varepsilon \in [0, \frac{1}{10})$ and 
\begin{equation}  \label{4.1}
T_{\varepsilon}(x)=\begin{cases}(1-\varepsilon)\left(\frac{3}{4}(2 x)+\frac{1}{4}(1-\sqrt[k]{1-2 x})\right)& \text {for } x \in\left[0, \frac{1}{2}\right),\\
 (1-\varepsilon)\left(\frac{3}{4}(2-2 x)+\frac{1}{4}(1-\sqrt[k]{2 x-1})\right) &\text{for } x \in\left(\frac{1}{2}, 1\right],
\end{cases}
\end{equation}
where $k > 2p$. In this example $c=\frac{1}{2}$ and it is immediate to see that $T_{\varepsilon}$ satisfies $(A1), (A2), (A3), (A5)$. Furthermore, 

$$
T_{\varepsilon}^{\prime}(x)= \begin{cases}(1-\varepsilon)\left(\frac{1}{2k(1-2 x)}(\sqrt[k]{1-2 x}+3k(1-2 x))\right) & \text { for } x \in\left[0, \frac{1}{2}\right), \\
(1-\varepsilon)\left(\frac{-1}{2k(2 x-1)}(\sqrt[k]{2 x-1}+3k(2 x-1))\right) & \text { for } x \in\left(\frac{1}{2}, 1\right],\end{cases}
$$
and $\left|T_{\varepsilon}^{\prime}(x)\right| \geqslant \frac{27k+9}{20k} > 1$ for $x \in\left[0, \frac{1}{2}\right) \cup\left(\frac{1}{2}, 1\right]$ and $\varepsilon \in\left[0, \frac{1}{10}\right)$. Thus, $T_{\varepsilon}$ satisfies $(A4)$. Also $T_{\varepsilon}^{\prime}$ satisfies $(A6)$ with $\beta=\frac{1-k}{k}$. Computing the second derivative we get 
$$
T_{\varepsilon}^{\prime \prime}(x)=\begin{cases}
\frac{(k-1) \cdot (1-\varepsilon)}{k^2 \cdot (2 x-1)^{2}} \cdot \sqrt[k]{1-2 x}  & \text { for } x \in\left[0, \frac{1}{2}\right), \\
\frac{(k-1) \cdot (1-\varepsilon)}{k^2 \cdot (2 x-1)^{2}}\cdot \sqrt[k]{2 x-1}  & \text { for } x \in\left(\frac{1}{2}, 1\right],
\end{cases}
$$
and so the first part of $(A7)$ is verified. Repeating the argument for the third derivative case provides the missing verification for $(A7)$ and $(A8)$. Now we will verify $(A9)$. Let $T_{0, \varepsilon}$ and $T_{1, \varepsilon}$ be the branches of $T_{\varepsilon}$, as in $(A1)$. We have $T_{i, \varepsilon}=D_{\varepsilon} \circ T_{i, 0}$, where $D_{\varepsilon}(x)=(1-\varepsilon)\cdot x$.

Let $P_{D_{\varepsilon}}, P_{T_{0}}$ denote the transfer operators associated with $D_{\varepsilon}$ and $T_{0}$, respectively. Note that have
$$
P_{T_{\varepsilon}}=P_{D_{\varepsilon}} \circ P_{T_{0}}
$$
and for any $x \in[0,1]$,
$$
P_{D_{\varepsilon}} g(x)=\left\{\begin{array}{c}
(1-\varepsilon)^{-1} g\left((1-\varepsilon)^{-1} x\right) \text { for } x \in[0, 1-\varepsilon] \\
0 \text { for } x \in(1-\varepsilon, 1] .
\end{array}\right.
$$

Firstly we will verify that
$$
\sup _{\|f\|_{W^{1,p}} \leqslant 1}\left\|\left(P_{T_{0}}-P_{T_{\varepsilon}}\right) f\right\|_{L^{2p}} \to 0
$$
as $\varepsilon \to 0$. Since we have already verified that $T_{\varepsilon}$ satisfies $(A 1), \ldots,(A 8)$, by Lemma \ref{lemma3.5} we obtain $g:=P_{T_{0}} f \in W^{1,p}$ when $f \in W^{1,p}$. Furthermore, there is an $M>0$ such that $\|f\|_{W^{1,p}} \leqslant 1$ implies $\|g\|_{W^{1,p}} \leqslant M$. Then, it is sufficient to prove that
\begin{equation} \label{4.2}
\sup _{\|g\|_{W^{1,p}} \leqslant M}\left\|g-P_{D_{\varepsilon}} g\right\|_{L^{2p}} \to 0 
\end{equation}
as $\varepsilon \to 0$. For brevity, we will also denote $P_{D_{\varepsilon}} g$ as
\begin{equation} \label{4.3}
P_{D_{\varepsilon}} g(x)=(1-\varepsilon)^{-1} 1_{[0,1-\varepsilon]} g\left((1-\varepsilon)^{-1} x\right).
\end{equation}

We can write by triangle inequality
\begin{equation} \label{4.4}
\|g - P_{D_\varepsilon} g\|_{L^{2p}} \leq \underbrace{\left(\int\limits_0^{1-\varepsilon} \left|g(x) - \frac{g\left(\frac{x}{1-\varepsilon}\right)}{1-\varepsilon}\right|^{2p} dx\right)^{\frac{1}{2p}}}_{\text{Term A}} + \underbrace{\left(\int\limits_{1-\varepsilon}^1 |g(x)|^{2p} dx\right)^{\frac{1}{2p}}}_{\text{Term B}}
\end{equation}

First, we will estimate \textbf{Term B} with $\varepsilon$. By the Sobolev embedding theorem, we have $\|g\|_{\infty} \leq C\|g\|_{W^{1,p}} \leq C \cdot M$ for some constant. Thus, $\sup_{x\in[1-\varepsilon,1]} |g(x)| \leq C \cdot M$ and 
\begin{align*}
      \textbf{Term B}\leq \left(\left(\sup_{x\in[1-\varepsilon,1]} |g(x)|\right)^{2p}\cdot\varepsilon\right)^{\frac{1}{2p}} \leq C \cdot M\cdot\varepsilon^{\frac{1}{2p}} \to 0 
\end{align*}

Now, let us analyze and evaluate \textbf{Term A} a little:

\begin{align*}
\left|g(x) - \frac{g\left(\frac{x}{1-\varepsilon}\right)}{1-\varepsilon}\right| 
&\leq \underbrace{\left|g(x) - g\left(\frac{x}{1-\varepsilon}\right)\right|}_{ \mathcal{S}_\varepsilon(x)} 
+ \underbrace{\left|1 - \frac{1}{1-\varepsilon}\right|\left|g\left(\frac{x}{1-\varepsilon}\right)\right|}_{ \mathcal{A}_\varepsilon(x)} = \mathcal{S}_\varepsilon(x) + \mathcal{A}_\varepsilon(x)
\end{align*}

Using the fundamental theorem of calculus and Hölder's inequality, we have:

\begin{align*}
    \mathcal{S}_\varepsilon(x) \leq \int_x^{\frac{x}{1-\varepsilon}} |g'(t)| dt \leq \left(\int_0^1 |g'|^p dt\right)^{\frac{1}{p}} \left(\tfrac{\varepsilon x}{1-\varepsilon}\right)^{1 - \frac{1}{p}} \leq M\left(\tfrac{\varepsilon x}{1-\varepsilon}\right)^{1 - \frac{1}{p}}
\end{align*}
and $\mathcal{A}_\varepsilon(x) = \tfrac{\varepsilon}{1-\varepsilon} \left|g\left(\tfrac{x}{1-\varepsilon}\right)\right|$.

Applying triangle inequality and variable substitution: 
\begin{align*}
\textbf{Term A} &\leq \|\mathcal{S}_\varepsilon\|_{L^{2p}} + \|\mathcal{A}_\varepsilon\|_{L^{2p}} \nonumber \\
&\leq M \left(\tfrac{\varepsilon}{1-\varepsilon}\right)^{1 - \frac{1}{p}} \left(\int_0^{1-\varepsilon} x^{2p(1 - \frac{1}{p})} dx\right)^{\frac{1}{2p}} + \tfrac{\varepsilon}{1-\varepsilon} \|g\|_{L^{2p}} \nonumber \\
&\leq C_1 M \varepsilon^{1 - \frac{1}{p}}  + C_2 M \varepsilon \to 0
\end{align*}
This proves that for $p>1$,  $\left\|g-P_{D_{\varepsilon}} g\right\|_{L^{2p}} \to 0$ as $\varepsilon \to 0$ when $\|f\|_{W^{1,p}} \leqslant 1$ and thus \eqref{2.1} is verified. 

Now for $f \in W^{2,p}$, we will verify the existence of the derivative $\left.\partial_{\varepsilon}P_{T_{\varepsilon}}f\right|_{\varepsilon=0}$ in $W^{1,p}$ and the linear response formula: 
\begin{equation}
    \left\|P_{T_{\varepsilon}}f - P_{T_{0}}f - \varepsilon\left(\left.\partial_{\varepsilon}P_{T_{\varepsilon}}f\right|_{\varepsilon=0}\right)\right\|_{W^{1,p}} = o(\varepsilon). 
\end{equation}
Since $f \in W^{2,p}$, then $g \in W^{2,p}$. Notice that we use $O(\varepsilon)$ in the $W^{2,p}$ sense. For small $\varepsilon$, we can expand $g\left(\frac{x}{1-\varepsilon}\right)$ around $\varepsilon=0$:
\begin{align*}
    g\left(\frac{x}{1-\varepsilon}\right) = g(x) + \varepsilon\cdot xg^{\prime}(x) + O(\varepsilon^{2}).
\end{align*}
Similarly, $\frac{1}{1-\varepsilon} = 1 + \varepsilon + O(\varepsilon^{2})$. Substituting these into $P_{D_{\varepsilon}}g$:
\begin{align*}
    P_{D_{\varepsilon}}g(x) = \left(1 + \varepsilon + O(\varepsilon^{2})\right)\left(g(x) + \varepsilon xg^{\prime}(x) + O(\varepsilon^{2})\right) \cdot 1_{[0,1-\varepsilon]}(x).
\end{align*}
Expanding and collecting terms up to $O(\varepsilon)$:
\begin{align*}
    P_{D_{\varepsilon}}g(x) = g(x) + \varepsilon\left(g(x) + xg^{\prime}(x)\right) + O(\varepsilon^{2}).
\end{align*}
Thus, the derivative of $P_{T_{\varepsilon}}f$ at $\varepsilon = 0$ is:
\begin{align*}
\partial_\varepsilon P_{T_z}f\big|_{\varepsilon=0} = \partial_\varepsilon (P_{D_\varepsilon}g)\big|_{\varepsilon=0} = g(x) + x g'(x) = P_{T_0}f(x) + x \cdot (P_{T_0}f)'(x).
\end{align*}
This exists in $W^{1,p}$ because $g \in W^{2,p}$ implies $g' \in W^{1,p}$, and multiplication by $x$ preserves $W^{1,p}$ regularity in $[0,1]$.
Now we estimate the remainder. The remainder term is:
\begin{align*}
    R_{\varepsilon}(x) = P_{T_{\varepsilon}}f(x) - P_{T_{0}}f(x) - \varepsilon\left( \partial_{\varepsilon}P_{T_{\varepsilon}}f\big|_{\varepsilon=0}\right) = O(\varepsilon^{2}).
\end{align*}
The $L^{p}$-norm of $R_{\varepsilon}$ is $O(\varepsilon^{2})$ by the definition. $W^{1,p}$ derivative of $R_{\varepsilon}$ is:
\begin{align*}
      R^{\prime}_{\varepsilon}(x) = \frac{d}{dx}\left(P_{D_{\varepsilon}}g(x) - g(x) - \varepsilon(g(x) + xg^{\prime}(x))\right).  
\end{align*}
Using the expansion:
\begin{align*}
    P_{D_{\varepsilon}}g(x) = g(x) + \varepsilon(g(x) + xg^{\prime}(x)) + \varepsilon^{2} \left(\text{higher-order terms}\right),
\end{align*}
we see that:
\begin{align*}
   R^{\prime}_{\varepsilon}(x) = O(\varepsilon^{2}) \cdot (\text{derivatives of } g, g^{\prime}, g^{\prime\prime}). 
\end{align*}
Since \(g \in W^{2,p}\), the derivatives are controlled, and:
\begin{align*}
    \left\|R_{\varepsilon}\right\|_{W^{1,p}} = O(\varepsilon^{2}) = o(\varepsilon).
\end{align*}
So, the error term is $o(\varepsilon)$ in $W^{1,p}$.

\section*{Acknowledgments} The author thanks Wael Bahsoun for proposing this problem.

\end{document}